\numberwithin{equation}{section}
\newtheorem{thm}{Theorem}[section]
\newtheorem{lem}[thm]{Lemma}
\newtheorem{conj}[thm]{Conjecture}
\newcommand{\be}{\begin{equation}}
\newcommand{\ee}{\end{equation}}
\newcommand{\ba}{\begin{array}}
\newcommand{\ea}{\end{array}}
\newcommand{\im}{\operatorname{Im}} 
\renewcommand{\em}{\it}
\newcommand{\bea}{\begin{eqnarray}}
\newcommand{\eea}{\end{eqnarray}}
\begin{document}
\title[On Cubic multisections of Eisenstein series]{On Cubic multisections of Eisenstein series}
\author{Andrew Alaniz and Tim Huber}
\address{Department of Mathematics, University of Texas - Pan American, 1201 West University Avenue,  Edinburg, Texas 78539,  USA}




\subjclass[2010]{Primary 11F11; Secondary 11F33}

\begin{abstract}
A systematic procedure for generating cubic multisections of Eisenstein series is given. The relevant series are determined from Fourier expansions for Eisenstein series by restricting the congruence class of the summation index modulo three. We prove that the resulting series are rational functions of $\eta(\tau)$ and $\eta(3\tau)$, where $\eta$ is the Dedekind eta function. A more general treatment of cubic dissection formulas is given by describing the dissection operators in terms of linear transformations. These operators exhibit properties that mirror those of similarly defined quintic operators.

\end{abstract}

\keywords{Eisenstein series; cubic theta functions; cubic
  multisections; t-cores}
\maketitle





\section{Introduction}
We begin with an identity observed by F. Garvan and communicated to the authors by B.C. Berndt. The purpose of this note is to place this identity in a larger context.
\begin{thm}
  \begin{align} \label{garvan}
    \sum_{n=0}^{\infty} \Bigl ( \sum_{d \mid 3 n +2} d \Bigr )q^{n} = \frac{3 (q^{3}; q^{3})_{\infty}^{6}}{(q;q)_{\infty}^{2}}, \qquad |q|<1. 
  \end{align}
\end{thm}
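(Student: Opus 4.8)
The idea is to recognize the left side of \eqref{garvan} as a \emph{cubic multisection} of the quasimodular weight-$2$ Eisenstein series $E_2(\tau)=1-24\sum_{n\ge1}\sigma_1(n)q^n$, where $q=e^{2\pi i\tau}$ and $\sigma_1(n)=\sum_{d\mid n}d$, and to identify that multisection with the indicated eta-quotient. Since $\sum_{d\mid 3n+2}d=\sigma_1(3n+2)$, replacing $q$ by $q^{3}$ in \eqref{garvan} and multiplying by $q^{2}$ makes it equivalent to the assertion
\[
S(\tau):=\sum_{m\equiv 2\,(3)}\sigma_1(m)\,q^{m}=3\,\frac{\eta(9\tau)^{6}}{\eta(3\tau)^{2}}\,.
\]
Applying the roots-of-unity filter with $\omega=e^{2\pi i/3}$, and noting that the constant term $1$ of $E_2$ does not survive because its index $0$ is not $\equiv 2\pmod 3$, gives the closed form
\[
S(\tau)=-\frac{1}{72}\sum_{k=0}^{2}\omega^{-2k}\,E_2\!\Bigl(\tau+\tfrac{k}{3}\Bigr)\,,
\]
so everything comes down to evaluating this weighted sum of translates of $E_2$.

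The decisive point is that $S$ is a \emph{holomorphic} modular form of weight $2$ on $\Gamma_0(9)$. To see this, note that conjugation by $N_k=\bigl(\begin{smallmatrix}1 & k/3\\ 0 & 1\end{smallmatrix}\bigr)$ turns a rational shift into a translation: for $\gamma=\bigl(\begin{smallmatrix}a&b\\ c&d\end{smallmatrix}\bigr)\in\Gamma_0(9)$ one has $\gamma\tau+\tfrac{k}{3}=\gamma_k\bigl(\tau+\tfrac{k}{3}\bigr)$ with $\gamma_k:=N_k\gamma N_k^{-1}=\bigl(\begin{smallmatrix}a+kc/3 & \ast\\ c & d-kc/3\end{smallmatrix}\bigr)$; here $9\mid c$, and $ad\equiv1\pmod3$ forces $a\equiv d\pmod3$, so all entries of $\gamma_k$ are integers and $\gamma_k\in\mathrm{SL}_2(\mathbb Z)$, with automorphy denominator $c\bigl(\tau+\tfrac{k}{3}\bigr)+\bigl(d-\tfrac{kc}{3}\bigr)=c\tau+d$ independent of $k$. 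Feeding this into $E_2\bigl(\tfrac{a'\tau+b'}{c'\tau+d'}\bigr)=(c'\tau+d')^{2}E_2(\tau)-\tfrac{6i}{\pi}\,c'(c'\tau+d')$ shows that each term $E_2(\gamma\tau+k/3)$ picks up the \emph{same} automorphy factor $(c\tau+d)^{2}$ and the \emph{same} anomaly $-\tfrac{6i}{\pi}c(c\tau+d)$; since $\sum_{k}\omega^{-2k}=1+\omega+\omega^{2}=0$, the anomalies cancel in the weighted sum and $S(\gamma\tau)=(c\tau+d)^{2}S(\tau)$. A routine check (again via the $E_2$ transformation, the anomalies cancelling cusp by cusp for the same reason) shows the weight-$2$ slashes of $S$ stay bounded at the four cusps of $\Gamma_0(9)$, so $S\in M_2(\Gamma_0(9))$; on the other side, $3\,\eta(9\tau)^{6}/\eta(3\tau)^{2}$ lies in $M_2(\Gamma_0(9))$ by Ligozat's conditions together with an order-at-cusps computation. (One could instead reach this point from classical cubic theta-function identities, in the form $S=\tfrac13\,c(q^{3})^{2}$ for the Borwein function $c(q)=3\,\eta(3\tau)^{3}/\eta(\tau)$.)

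Since $\dim M_2(\Gamma_0(9))=3$, Sturm's bound reduces the identity to checking that the $q$-expansions of $S(\tau)$ and $3\,\eta(9\tau)^{6}/\eta(3\tau)^{2}$ agree through $q^{2}$; both equal $3q^{2}+O(q^{3})$, the constant and $q^{1}$ coefficients of $S$ vanishing while $\sigma_1(2)=3$. Hence $S(\tau)=3\,\eta(9\tau)^{6}/\eta(3\tau)^{2}$, and undoing the substitution $q\mapsto q^{1/3}$ and dividing by $q^{2}$ recovers \eqref{garvan} with right-hand side $3(q^{3};q^{3})_\infty^{6}/(q;q)_\infty^{2}$. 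I expect the main obstacle to be exactly this passage from the quasimodular $E_2$ to a genuine modular form: one must verify both that the linear-in-$\tau$ anomalies cancel in the weighted translate-sum and that no poles are introduced at the remaining cusps, after which the identity reduces to a finite mechanical verification.
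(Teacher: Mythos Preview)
Your argument is correct, but it proceeds quite differently from the paper. The paper does not invoke $\Gamma_0(9)$, Ligozat, or Sturm at all: it starts from the known parameterization $\tfrac{3}{2}E_2(q^3)-\tfrac{1}{2}E_2(q)=a(q)^2$, replaces $q$ by $q^{1/3}$, and uses the Borwein transformation $a(q^{1/3})=a(q)+2c(q)$ to obtain
\[
\tfrac{3}{2}E_2(q)-\tfrac{1}{2}E_2(q^{1/3})=\bigl(a(q)+2c(q)\bigr)^2=a(q)^2+4a(q)c(q)+4c(q)^2.
\]
Since $a(q)$ has integer $q$-exponents and $c(q)\in q^{1/3}\mathbb{Z}[[q]]$, the part of the right side supported on exponents $\equiv 2/3\pmod 1$ is exactly $4c(q)^2$; matching this against the same part of the left side and inserting the product formula $c(q)=3q^{1/3}(q^3;q^3)_\infty^3/(q;q)_\infty$ gives the identity in one line. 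In effect the paper is carrying out explicitly the parenthetical remark you made, $S=\tfrac{1}{3}c(q^3)^2$, rather than treating it as an alternative.

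What each approach buys: the paper's route is entirely elementary---no dimension counts, no cusp checks---and it immediately explains \emph{why} the right side is a perfect square of an eta-quotient, which is the structural fact the paper then exploits for higher-weight Eisenstein series via the same trimidiation $q\mapsto q^{1/3}$ together with Theorem~\ref{brv}. Your approach is more portable: once one is willing to verify membership in $M_2(\Gamma_0(9))$ and run Sturm, the same template proves any such identity without needing a cubic-theta parameterization in advance. The one place where your write-up is thin is the ``routine check'' at the cusps $0,\,1/3,\,2/3$: the cancellation of the $E_2$ anomaly under $\Gamma_0(9)$ that you established does not by itself control growth at the other cusps, so you should either carry out that computation or, more cleanly, rewrite $S$ as a $\mathbb{Q}$-linear combination of $E_2(\tau)$, $E_2(3\tau)$, $E_2(9\tau)$ (the roots-of-unity sum collapses to these via $\sum_k E_2\bigl(\tfrac{\tau+k}{3}\bigr)=3E_2|U_3$), after which holomorphicity at every cusp is immediate from the standard fact that $pE_2(p\tau)-E_2(\tau)\in M_2(\Gamma_0(p))$.
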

Here, we employ the usual notation for the $q$-expansion of the Dedekind eta function defined by $q^{-1/24}\eta(\tau) = \prod_{n=1}^{\infty} (1 - q^{n}) = (q;q)_{\infty}$, $q=e^{2\pi i \tau}$, $\im \tau > 0$. In this paper we prove Garvan's identity and place  \eqref{garvan} in the context of several infinite classes of relations for dissections of Eisenstein series. Some examples of formulas that ensue are
\begin{align*}
    \sum_{n=0}^{\infty} \left ( \sum_{d \mid 3n+1} d^{3} \right )q^{n} 
&= (q;q)_{\infty}^{8}+3^4 q\frac{(q^{3}; q^{3})_{\infty}^{12}}{(q;q)_{\infty}^{4}}, \\ 
    \sum_{n=0}^{\infty} \left ( \sum_{d \mid 3n+2} d^{7} \right )q^{n} = 3 \cdot 43(q;q)_{\infty}^{16} + 1&10 \cdot 3^{6} q (q;q)_{\infty}^{4} (q^{3}; q^{3})_{\infty}^{12} + 41 \cdot 3^{10}q^{2} \frac{(q^{3}; q^{3})_{\infty}^{24}}{(q;q)_{\infty}^{8}}, 
\end{align*}
and, if $ \left ( \frac{\cdot}{3} \right )$ denotes the Jacobi symbol modulo three,
\begin{align} \label{3core}
 \sum_{n=0}^{\infty} \left (\sum_{d\mid 3n+1} \left ( \frac{d}{3} \right ) \right ) q^{n}  &=  \frac{(q^{3}; q^{3})_{\infty}^{3}}{(q;q)_{\infty}}, \\ 
 \sum_{n=0}^{\infty} \left ( \sum_{d\mid 3n+2} \left ( \frac{d}{3} \right ) d^{4} \right )q^{n}   
= -15 (q;q)_{\infty}^{7}&(q^{3}; q^{3})_{\infty}^{3}- 3^{6} q\frac{(q^{3}; q^{3})_{\infty}^{15}}{(q;q)_{\infty}^{5}}. \nonumber
  \end{align}
The preceding relations are similar to dissections for
Eisenstein series of level five appearing in \cite{huber_jac}
involving Dirichlet characters modulo five. We have the
following dissection formulas for the quintic Dirichlet
character defined by $\langle \chi(n) \rangle_{n=0}^{4} =
  \langle 0, 1, -i, i, -1 \rangle$.
\begin{thm} \label{fjl}
\begin{align*}
  \sum_{n=0}^{\infty} \Bigl ( \sum_{d \mid 5 n +4} \chi(d) \Bigr ) q^{n} &=  \frac{i(q;q)_{\infty} (q^{5}; q^{5})_{\infty}}{(q^{2}; q^{5})_{\infty}^{3}(q^{3}; q^{5})_{\infty}^{3}}, \quad
  \sum_{n=0}^{\infty} \Bigl ( \sum_{d \mid 5 n +3} \chi(d) \Bigr ) q^{n} =  \frac{(1 + i) (q^{5}; q^{5})_{\infty}^{2}}{(q^{2}; q^{5})_{\infty}(q^{3}; q^{5})_{\infty}}, \\ 
  \sum_{n=0}^{\infty} \Bigl ( \sum_{d \mid 5 n +2} \chi(d) \Bigr ) q^{n} &= \frac{(1 - i)(q^{5}; q^{5})_{\infty}^{2}}{(q; q^{5})_{\infty}(q^{4}; q^{5})_{\infty}}, \quad 
  \sum_{n=0}^{\infty} \Bigl ( \sum_{d \mid 5 n +1} \chi(d) \Bigr ) q^{n} = \frac{(q;q)_{\infty} (q^{5}; q^{5})_{\infty}}{(q; q^{5})_{\infty}^{3}(q^{4}; q^{5})_{\infty}^{3}}.
\end{align*}
\end{thm}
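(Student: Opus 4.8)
The plan is to realize each of the four series as a multisection of a single weight-one Eisenstein series and then to match it against a Rogers--Ramanujan product inside a finite-dimensional space of modular forms. Write $q=e^{2\pi i\tau}$, $a(m)=\sum_{d\mid m}\chi(d)$, and
\[
\mathcal{E}(\tau)=\sum_{m\ge1}a(m)q^{m}=\sum_{d\ge1}\chi(d)\,\frac{q^{d}}{1-q^{d}} .
\]
For $\omega=e^{2\pi i/5}$ the multisection operator is $D_{j}\mathcal{E}:=\sum_{m\equiv j\,(\mathrm{mod}\,5)}a(m)q^{m}=\tfrac{1}{5}\sum_{k=0}^{4}\omega^{-jk}\mathcal{E}(\tau+\tfrac{k}{5})$, and since $\sum_{n\ge0}a(5n+j)q^{n}$ is obtained from $q^{-j}D_{j}\mathcal{E}$ by the substitution $q\mapsto q^{1/5}$, Theorem~\ref{fjl} is equivalent to four explicit evaluations of $D_{1}\mathcal{E},\dots,D_{4}\mathcal{E}$. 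I would first recast the right-hand sides uniformly: with $G,H$ the Rogers--Ramanujan functions one has $(q;q^{5})_{\infty}(q^{4};q^{5})_{\infty}=1/G(q)$, $(q^{2};q^{5})_{\infty}(q^{3};q^{5})_{\infty}=1/H(q)$, and $G(q)H(q)=(q^{5};q^{5})_{\infty}/(q;q)_{\infty}$, so the four products equal $(q^{5};q^{5})_{\infty}^{2}$ times $G^{2}/H,\,G,\,H,\,H^{2}/G$ (in the order $j=1,2,3,4$), up to the scalars $1,\,1-i,\,1+i,\,i$, with $H/G$ and $G/H$ being powers of the Rogers--Ramanujan continued fraction $R(q)=q^{1/5}H(q)/G(q)$.

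Next I would set up the modular framework. Because $\chi$ is a primitive odd character of conductor $5$, $\mathcal{E}$ differs by a constant from the holomorphic weight-one Eisenstein series $E_{1}^{\mathbf{1},\chi}$ of nebentypus $\chi$ on $\Gamma_{0}(5)$; as $D_{j}$ annihilates constants for $j\not\equiv0$, each $D_{j}\mathcal{E}$ is a \emph{holomorphic} weight-one modular form on a fixed congruence subgroup $\Gamma$ of level dividing $25$ (conjugate $\Gamma_{0}(5)$ by $\tau\mapsto\tau+k/5$). On the other side, $\eta(\tau),\eta(5\tau)$, the generalized eta-products $(q^{a};q^{5})_{\infty}$, and $R(q)$ all have classical modular transformation formulas, from which one verifies that, after the same substitution, the four products are also weight-one holomorphic modular forms on $\Gamma$ (or a slightly larger level-$25$ group), with computable orders at every cusp. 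Both sides then lie in a common finite-dimensional space of weight-one forms, so the identities reduce to matching finitely many Fourier coefficients (to stay rigorous in weight one one may first multiply through by a fixed nonzero Eisenstein series of positive weight and apply the Sturm bound there); the scalars $1,1-i,1+i,i$ are then fixed by comparing constant terms, e.g.\ $a(2)=\chi(1)+\chi(2)=1-i$ against the leading coefficient of the $j=2$ product.

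The step I expect to be the genuine obstacle is the cusp analysis for the right-hand sides: $\Gamma_{0}(25)$ has several cusps, the factors $(q^{a};q^{5})_{\infty}$ transform with nontrivial multiplier systems, and showing that the Rogers--Ramanujan products are holomorphic (not merely meromorphic) on $\Gamma$ \emph{and} extracting the precise multipliers responsible for the constants $i$ and $1\pm i$ will require careful bookkeeping with the $\eta$- and $R$-transformation laws. A route that avoids this is purely $q$-theoretic: dissect the Lambert series $\sum_{d\ge1}\chi(d)q^{d}/(1-q^{d})$ termwise according to $d$ and to its cofactor modulo $5$, and combine with the product expansions of $G$ and $H$, reducing the theorem to a short list of polynomial identities among the Rogers--Ramanujan functions of the type established by Ramanujan --- elementary, but heavier to compute. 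In either approach the structural point, matching the ``linear transformation'' viewpoint of this paper, is that $D_{1},\dots,D_{4}$ act on the span of the translates $\mathcal{E}(\tau+k/5)$, and the theorem records the matrix of that action in the natural basis of $\eta$- and Rogers--Ramanujan products.
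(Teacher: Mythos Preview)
The paper does not prove Theorem~\ref{fjl} at all: it is quoted from \cite{huber_jac} as background and motivation for the cubic results that form the paper's actual content, so there is no ``paper's own proof'' to compare against. Your proposal therefore cannot be judged against the paper's argument, only on its own merits.

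On those merits, your outline is a reasonable modular-forms strategy. The identification of $\mathcal{E}$ with a weight-one Eisenstein series for $\Gamma_{0}(5)$ with nebentypus $\chi$, the observation that the $D_{j}$ for $j\not\equiv 0$ land in a finite-dimensional space of holomorphic forms of level dividing $25$, and the plan to match Fourier coefficients after multiplying by a form of higher weight to get a clean Sturm bound are all standard and would work once the details are filled in. You correctly flag the main labor: verifying holomorphy at every cusp for the Rogers--Ramanujan products and tracking the multiplier systems that produce the constants $1,\,1-i,\,1+i,\,i$. This is a genuine computation but not a conceptual obstacle; the transformation behavior of $R(q)$ and of the generalized $\eta$-quotients $(q^{a};q^{5})_{\infty}$ under $\Gamma(5)$ is classical (going back to Rogers, Watson, and the modular-function interpretation of the Rogers--Ramanujan continued fraction). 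Your alternative $q$-series route via Lambert-series dissection and Ramanujan's identities among $G$ and $H$ is also viable, and is likely closer in spirit to what \cite{huber_jac} actually does, given that the present paper's cubic analogues are proved by exactly that kind of elementary dissection rather than by invoking dimension bounds.
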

The analogous dissection formulas we derive for Eisenstein of level three are consequences of a triple of fortuitous relations between generators for the relevant spaces
 of modular forms. To derive the expansions, we employ well known properties of the Borwein's cubic theta functions $a(q)$, $b(q)$, and $c(q)$, defined by
\begin{align}
a(q) &= \sum_{m,n =-\infty}^{\infty} q^{n^{2} + n m + m^{2}},  \quad
b(q) = \sum_{m,n =-\infty}^{\infty} \omega^{n-m} q^{n^{2} + n m + m^{2}}, \label{abq} \\ 
c(q) &= \sum_{m,n =-\infty}^{\infty}  q^{ \left ( n + \frac{1}{3} \right )^{2} + \left ( n + \frac{1}{3} \right ) \left ( m + \frac{1}{3} \right ) + \left ( m + \frac{1}{3} \right )^{2}}, \quad \omega = e^{2 \pi i/3}. \label{cq}
\end{align}

\begin{thm} \label{brv}
  \begin{align} \label{borwein}
  a^{3}(q) = b^{3}(q)+c^{3}(q)    
  \end{align}
  \begin{align}
a(q)	=a(q^{3})+2c(q^{3}), \qquad 
b(q)	=a(q^{3})-c(q^{3}).    
  \end{align} 
\end{thm}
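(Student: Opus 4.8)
The plan is to recall proofs of these three classical identities (due to J.\,M.\ and P.\,B.\ Borwein, the cubic relation also to Borwein, Borwein and Garvan) in the present notation, treating the two linear relations first because they are elementary. Write $Q(n,m)=n^{2}+nm+m^{2}$ and use the congruence $Q(n,m)\equiv(n-m)^{2}\pmod 3$, so that $3\mid Q(n,m)$ exactly when $n\equiv m\pmod 3$. Split the series $a(q)=\sum_{n,m}q^{Q(n,m)}$ and $b(q)=\sum_{n,m}\omega^{\,n-m}q^{Q(n,m)}$ over $\Z^{2}$ according to the residue of $n-m$ modulo $3$. On the diagonal class $n\equiv m\pmod 3$, writing $n=m+3t$ gives $Q(n,m)=3\,Q(m+t,t)$, and since $(m,t)\mapsto(m+t,t)$ is a bijection of $\Z^{2}$ this class sums to $a(q^{3})$. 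On the class $n-m\equiv 1\pmod 3$, writing $n=m+1+3s$ and expanding gives $Q(n,m)=3\bigl[(m+s+\tfrac13)^{2}+(m+s+\tfrac13)(s+\tfrac13)+(s+\tfrac13)^{2}\bigr]$, and since $(m,s)\mapsto(m+s,s)$ is a bijection of $\Z^{2}$ this class sums to $c(q^{3})$; the symmetry $(n,m)\mapsto(m,n)$ then shows the class $n-m\equiv 2\pmod 3$ also sums to $c(q^{3})$. Recombining the three pieces with weight $1$ gives $a(q)=a(q^{3})+2c(q^{3})$, and recombining with the weights $1,\omega,\omega^{2}$ (the values of $\omega^{\,n-m}$ on the three classes) gives $b(q)=a(q^{3})+(\omega+\omega^{2})c(q^{3})=a(q^{3})-c(q^{3})$.

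For the cubic identity $a^{3}(q)=b^{3}(q)+c^{3}(q)$ I would first record the product evaluations $b(q)=(q;q)_{\infty}^{3}/(q^{3};q^{3})_{\infty}$ and $c(q)=3q^{1/3}(q^{3};q^{3})_{\infty}^{3}/(q;q)_{\infty}$, which follow from Jacobi's triple product applied to the defining binary theta series, together with the classical representation $a(q)=\varphi(q)\varphi(q^{3})+4q\,\psi(q^{2})\psi(q^{6})$ obtained by splitting $Q(n,m)$ by the parity of $m$, where $\varphi(q)=\sum_{n}q^{n^{2}}$ and $\psi(q)=\sum_{n\ge 0}q^{n(n+1)/2}$. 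Rewriting $\varphi$ and $\psi$ as eta-quotients reduces the claim to a single modular identity of weight $3$ on $\Gamma_{0}(3)$, which can be verified by comparing $q$-expansions past the Sturm bound; alternatively one may cite the Borweins' derivation via the cubic arithmetic--geometric mean.

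I expect the cubic identity to be the only genuine obstacle: the two dissection formulas fall out of elementary reindexing, whereas $a^{3}=b^{3}+c^{3}$ admits no short self-contained proof and must be routed either through the binary-theta product evaluations and an eta-quotient verification or through modular forms on $\Gamma_{0}(3)$. Within the linear relations the only point requiring care is producing the correct unimodular substitutions that identify each off-diagonal residue class with the series $c(q^{3})$.
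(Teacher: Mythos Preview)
The paper does not prove this theorem at all: immediately after the statement it simply records that ``Equivalent formulations of all three identities appear in Ramanujan's notebooks'' and that ``The first proof of \eqref{borwein} was given by J.~Borwein and P.~Borwein,'' citing \cite{ramnote,MR1010408,alt_base}. There is no proof environment attached to Theorem~\ref{brv}; it is quoted as background.

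Your proposal therefore goes beyond what the paper does. Your argument for the two linear relations is correct and is essentially the standard dissection argument (as in Borwein--Borwein--Garvan \cite{MR1243610}): the congruence $Q(n,m)\equiv (n-m)^{2}\pmod 3$, the substitution $n=m+3t$ on the diagonal class yielding $Q(n,m)=3\,Q(m+t,t)$, and the substitution $n=m+1+3s$ on the off-diagonal class yielding $Q(n,m)=3\bigl[(m+s+\tfrac13)^{2}+(m+s+\tfrac13)(s+\tfrac13)+(s+\tfrac13)^{2}\bigr]$ all check out, and the unimodular changes of variables are the right ones. For the cubic identity your sketch is honest about its status: routing through the product forms \eqref{vb} and a Sturm-bound check on $\Gamma_{0}(3)$ is a valid strategy, though not a self-contained proof as written. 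Since the paper itself only cites \cite{MR1010408} for \eqref{borwein}, your treatment is already more detailed than what the paper supplies.
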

Equivalent formulations of all three identities appear in Ramanujan's notebooks \cite{ramnote}. The first proof of \eqref{borwein} was given by J. Borwein and P. Borwein \cite{MR1010408}. Transformation formulas resulting from Theorem \ref{brv} appear in a different form in \cite{alt_base} and motivate cubic analogues of Jacobi's Principles of Duplication and Dimidiation \cite{funnov}.
Such formulas are subsumed in Ramanujan's ``theories of elliptic functions to alternative bases'' and termed Processes of Triplication and Trimidiation \cite{alt_base}.
The utility of these transformations in studying the coefficients of modular forms is apparent in light of the fact that $a(q)$ and $c(q)$ serve as generators for the principal congruence subgroup of level three \cite{MR1893493}. In fact, Sebbar \cite{MR1904094} proved that the principal congruence subgroups of level three and five are two of precisely six congruence subgroups of $SL(2, \Bbb Z)$ whose graded ring of modular forms is a polynomial ring with two generators, each of degree one.

The remainder of the paper is organized as follows. In Section 2, we
prove Garvan's identity \eqref{garvan} and generalize the dissection
technique to Eisenstein series of arbitrary weight and cubic
character. We identify classes of Eisenstein series whose cubic
dissections are expressible as rational functions of the Dedekind eta
function at argument $q$ and $q^{3}$. These representations
demonstrate general congruence properties for divisor sums and related
arithmetic functions that are stated at the beginning of Section 2 and
proven in the conclusion of Section 2. In Section 3 we provide
explicit matrix representations for dissection operators on certain
vector spaces. We use the operators to derive new congruence properties for the coefficients of corresponding eigenforms.

\section{Cubic dissections for Eisenstein series}
In this section, we demonstrate that dissections formulas like
\eqref{garvan} result from parameterizations for Eisenstein series in
terms of the cubic theta functions $a(q)$, $b(q)$, and $c(q)$. The
combinatorial consequences of our work include several new results for
twisted divisor sums and place a number of known results in
context. In particular, the ensuing dissection formulas exhibit the following combinatorial interpretations.
\begin{thm}\label{th:1} Suppose $\ell \in \Bbb N$ is odd and $3^{s}\mid \mid \ell$. Then for $k \in \Bbb N$, 
  \begin{align} \label{eq:la}
  \sum_{d \mid 3k+2} \left ( \frac{d}{3} \right )d^{2\ell} &\equiv 0
  \pmod{3^{s+1}}, \qquad   \sum_{d \mid 3k+1} \left ( \frac{d}{3}
  \right )d^{2\ell} \equiv c_{3}(k) \pmod{3^{s+1}}, \\  \sum_{d \mid
    3k + 2} d^{\ell} &\equiv 0 \pmod{3^{s+1}} , \qquad \sum_{d \mid
    3k + 1} d^{\ell} \equiv c_{3}(k) \pmod{3^{s+1}}, \label{eq:la1}
  \end{align}
where $c_{3}(k)$ is the number of $3$-core partitions of $k$; i.e., the number of partitions of $k$ satisfying the condition that no hook number in the Ferrers graph is divisible by $3$.
\end{thm}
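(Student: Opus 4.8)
The plan is to derive the congruences in Theorem~\ref{th:1} from the eta-quotient representations that the paper establishes for the relevant cubic dissections, reading off divisibility by powers of $3$ from the $q$-expansions.

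First I would obtain, for each odd $\ell$ with $3^s \mid\mid \ell$, explicit formulas of the shape
\begin{align*}
\sum_{k=0}^{\infty}\Bigl(\sum_{d\mid 3k+1} d^{\ell}\Bigr)q^{k}
&= \sum_{j} \alpha_{j}\, q^{j}\,\frac{(q^{3};q^{3})_{\infty}^{a_{j}}}{(q;q)_{\infty}^{b_{j}}},\\
\sum_{k=0}^{\infty}\Bigl(\sum_{d\mid 3k+2} d^{\ell}\Bigr)q^{k}
&= \sum_{j} \beta_{j}\, q^{j}\,\frac{(q^{3};q^{3})_{\infty}^{c_{j}}}{(q;q)_{\infty}^{d_{j}}},
\end{align*}
and similarly for the twisted sums $\sum \left(\frac{d}{3}\right)d^{2\ell}$, coming from the parameterization of the level-three Eisenstein series in $a(q)$, $b(q)$, $c(q)$ together with Theorem~\ref{brv}. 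The key structural claim to extract from that derivation is that every term on the right-hand side, \emph{except} the single term that equals $(q^3;q^3)_\infty^3/(q;q)_\infty = \sum c_3(k)q^k$ (the $3$-core generating function, cf. \eqref{3core}), carries an explicit factor of $3^{s+1}$ in its coefficient $\alpha_j$ or $\beta_j$; and that for the $3k+2$ dissections there is no $3$-core term at all, so every coefficient is divisible by $3^{s+1}$. Granting this, comparing coefficients of $q^k$ gives \eqref{eq:la} and \eqref{eq:la1} at once, since $(q^3;q^3)_\infty^3/(q;q)_\infty$ has integer coefficients and the remaining eta-quotients $q^{j}(q^3;q^3)^{a}_\infty/(q;q)^{b}_\infty$ do as well (each $(q;q)_\infty^{-b}$ expands with integer coefficients).

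The identification $(q^3;q^3)_\infty^3/(q;q)_\infty=\sum_{k\ge0}c_3(k)q^k$ is the standard $t=3$ case of the $t$-core generating function $\prod (1-q^{tn})^{t}/(1-q^{n}) = \sum c_t(k)q^k$, which I would cite; combined with \eqref{3core} this shows $\sum_{d\mid 3k+1}\left(\frac{d}{3}\right)d^{0}\equiv c_3(k)$ trivially, and the content is that raising the exponent to $2\ell$ (resp. passing from $d^0$ to $d^\ell$) only perturbs the generating function by multiples of $3^{s+1}$. To see the power of $3$ in the coefficients, I would track the $3$-adic valuation through the recursion supplied by the Triplication/Trimidiation transformations: writing a weight-$w$ Eisenstein series as a polynomial in $a(q),c(q)$, applying $a(q)=a(q^3)+2c(q^3)$, $b(q)=a(q^3)-c(q^3)$, $a^3=b^3+c^3$, and collecting the residue class of the exponent mod $3$ introduces binomial coefficients and powers of $2$ and $3$; the $3^{s}$ dividing $\ell$ enters through the Eisenstein coefficients $\sigma_{\ell}$ and through the von Staudt--Clausen-type denominators, and one checks $v_3$ is $\ge s+1$ on every non-$3$-core component.

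The main obstacle is precisely this uniform $3$-adic estimate: it is not enough to exhibit the eta-quotient identities for small $\ell$; one needs the coefficient $3^{s+1}$ to appear for \emph{all} odd $\ell$ with $3^s\|\ell$ simultaneously, which requires an inductive or closed-form control of how the dissection operator acts on the graded ring $\Bbb C[a(q^3),c(q^3)]$ and, in particular, a clean description of which monomials $a^{i}c^{j}$ survive in the $3k+1$ versus $3k+2$ dissection and with what $3$-valuation. I would set this up as a lemma (proved in Section~2, as the excerpt promises) stating that the matrix of the cubic dissection operator in the basis of monomials of fixed weight has entries whose relevant off-diagonal/``non-core'' part is $\equiv 0 \pmod{3^{s+1}}$ after multiplying by the normalizing constant carrying $3^s$; the congruences then follow by extracting the coefficient of $q^k$.
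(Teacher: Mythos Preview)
Your plan overshoots: you propose to control the $3$-adic valuation of every coefficient in the eta-quotient expansion of the weight-$\ell$ (or weight-$2\ell$) dissected Eisenstein series, and you correctly flag this uniform estimate as the main obstacle. The paper never confronts this obstacle because it never needs the higher-weight eta expansions for the congruence at all.

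The paper's argument is entirely elementary and operates \emph{term by term} on the divisor sum. Every divisor $d$ of $3k+1$ or $3k+2$ is coprime to $3$. Since $\varphi(3^{s+1})=2\cdot 3^{s}$ and $3^{s}\mid \ell$, Euler's theorem gives $d^{2\ell}\equiv 1\pmod{3^{s+1}}$, so
\[
\sum_{d\mid 3k+r}\Bigl(\tfrac{d}{3}\Bigr)d^{2\ell}\ \equiv\ \sum_{d\mid 3k+r}\Bigl(\tfrac{d}{3}\Bigr)\pmod{3^{s+1}},\qquad r=1,2,
\]
and the right side is handled by the two identities \eqref{gn2} of Theorem~\ref{garvans}: it vanishes for $r=2$ and equals the $q^{k}$-coefficient of $(q^{3};q^{3})_{\infty}^{3}/(q;q)_{\infty}=\sum c_{3}(k)q^{k}$ for $r=1$. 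For \eqref{eq:la1} the paper uses the refinement $d^{\frac{1}{2}\varphi(3^{s+1})}=d^{3^{s}}\equiv\bigl(\tfrac{d}{3}\bigr)\pmod{3^{s+1}}$ (Lahiri's lemma, equation~\eqref{eq:4star}); writing $\ell=3^{s}m$ with $m$ odd gives $d^{\ell}\equiv\bigl(\tfrac{d}{3}\bigr)^{m}=\bigl(\tfrac{d}{3}\bigr)\pmod{3^{s+1}}$, so \eqref{eq:la1} reduces to the same two identities.

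In short, the eta-quotient machinery is used only once, at the lowest weight (Theorem~\ref{garvans}); the dependence on $\ell$ and $s$ disappears after a one-line reduction by Euler's theorem. Your inductive $3$-adic bookkeeping on the dissection matrices is unnecessary for Theorem~\ref{th:1}, and as stated your proposal leaves exactly that step unproven.
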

If $\ell = s = 0$ in the latter congruence of \eqref{eq:la}, we obtain
an equality corresponding to \eqref{3core}, appearing in Granville and
Ono's work \cite{MR1321575}. The left congruences from \eqref{eq:la1} are proven
without reference to Eisenstein series in \cite{MR0242764}. The
remaining congruences from Theorem \ref{th:1} appear to be
new. At the end of this section, we show that each of the generating
functions for \eqref{eq:la}--\eqref{eq:la1} reduce modulo $3^{s+1}$ to dissected Eisenstein series of low weight. 
Moreover, our work results in explicit eta function expansions for the
generating functions corresponding to the trisected divisor sums in Theorem \ref{th:1}. The product
formulations follow from the product expansions \cite{MR1010408} for $b(q)$ and $c(q)$
\begin{align} \label{vb}
b(q) = \frac{(q;q)_{\infty}^{3}}{(q^{3};q^{3})_{\infty}}, \qquad  c(q) = 3 q^{1/3} \frac{(q^{3};q^{3})_{\infty}^{3}}{(q;q)_{\infty}}. 
\end{align}

To derive the aforementioned expansions for generating functions in terms
of cubic theta functions, two classes of Eisenstein series will be appropriate to our discussion. The first are normalized Eisenstein series of weight $k$ for the full modular group
\begin{align} \label{eis_full}
  E_{2k}(q) = 1 + \frac{2}{\zeta(1 - 2 k)} \sum_{n=1}^{\infty} \frac{n^{2k-1} q^{n}}{1 - q^{n}},
\end{align}
where $\zeta$ is the analytic continuation of the Riemann $\zeta$-function. We will also refer to the Hecke Eisenstein series associated with the Dirichlet character $\chi$ modulo three
\begin{align} \label{eisdef}
  E_{k,\chi}(q) = 1 +  \frac{2}{L(1 - k, \chi)} \sum_{n=1}^{\infty}
  \chi(n) \frac{n^{k-1} q^{n}}{1 - q^{n}}, 
\end{align}
where $L(1 - k, \chi)$ denotes associated Dirichlet $L$-series. 
Since we build cubic Eisenstein expansions inductively, we require representations
for low weight series from \cite{MR2249503,shen3}
\begin{align*}
  E_{4}(q) = a^{4}(q) + 8 ac^{3}(q), \ \ E_{6}(q) = a^{6}(q) - 20
    a^{3}(q) c^{3}(q) - 8 c^{6}(q), \ \ E_{3, \left (
        \frac{\cdot}{3} \right )}(q) = b^{3}(q).
\end{align*}

In the proof of the next Theorem, we show that Garvan's formula \eqref{garvan} arises from a dissection of the Hecke Eisenstein series of weight two and trivial character $\mathbf{1}$.
The technique applied here is representative of the methods used in the rest of the paper.
\begin{thm} \label{garvans} Let $\left ( \frac{\cdot}{3} \right )$ denote the Jacobi symbol modulo three. Then 
  \begin{align} \label{gn1}
    \sum_{n=0}^{\infty} \Bigl ( \sum_{d \mid 3 n +2} d \Bigr )q^{n} &= \frac{3 (q^{3}; q^{3})_{\infty}^{6}}{(q;q)_{\infty}^{2}}, \\ 
  \sum_{n=0}^{\infty}\left(\sum_{d\mid3n+1} \left ( \frac{d}{3} \right ) \right)q^{n} =
\frac{(q^{3}; q^{3})_{\infty}^{3}}{(q;q)_{\infty}},
 \qquad &\hbox{and} \qquad  
    \sum_{n=0}^{\infty}\left(\sum_{d\mid3n+2} \left ( \frac{d}{3} \right ) \right)q^{n} = 0. \label{gn2}
  \end{align}
\end{thm}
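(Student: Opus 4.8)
The plan is to express the Hecke Eisenstein series $E_{2,\mathbf 1}(q)$ and $E_{1,(\frac{\cdot}{3})}(q)$ (more precisely, their relevant twisted versions) in terms of the cubic theta functions $a(q)$ and $c(q)$, and then extract the arithmetic progressions $3n+1$ and $3n+2$ from the $q$-expansion using the triplication formulas from Theorem~\ref{brv}. For \eqref{gn1}, I would start from the weight-two Eisenstein series attached to $\chi=\mathbf 1$ on $\Gamma_0(3)$, namely a linear combination of $1-24\sum \sigma_1(n)q^n$ at arguments $q$ and $q^3$; the combinatorial content is that $\sum_{d\mid m}d$ over $m\equiv 2\pmod 3$ is governed by a single weight-two cusp-like form on $\Gamma_0(3)$. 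Writing $\Phi(q)=\sum_{n\ge 0}\bigl(\sum_{d\mid 3n+2}d\bigr)q^{n}$, I would first identify $q^{2/3}\Phi(q)$ (or an appropriate shift) with the part of a weight-two Eisenstein series supported on exponents $\equiv 2\pmod 3$. The key is that the space of weight-two forms on $\Gamma_0(3)$ relevant here is one-dimensional modulo Eisenstein pieces, so once I know $\Phi$ is modular of the right weight and level with the right behavior at cusps, it must be a constant multiple of $c^2(q^{1/3})/\bigl(\text{something}\bigr)$, and \eqref{vb} converts $c$ into the eta-quotient $3q^{1/3}(q^3;q^3)_\infty^3/(q;q)_\infty$. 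Squaring gives $9q^{2/3}(q^3;q^3)_\infty^6/(q;q)_\infty^2$; dividing out $q^{2/3}$ and the factor $3$ (fixed by checking the constant term, i.e.\ $\sigma_1(2)=3$) yields the stated right-hand side.

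More concretely and in the spirit announced before Theorem~\ref{garvans}, I would avoid invoking dimension counts directly and instead argue as follows. The generating function $\sum_{n\ge 0}\sigma_1(n)q^n$ (with $\sigma_1(0)$ handled separately) is, up to normalization, $\tfrac{1}{24}(1-E_2(q))$; trisecting by the residue of $n$ modulo $3$ and using $E_2(q)-3E_2(q^3) = $ a holomorphic weight-two form on $\Gamma_0(3)$ isolates $\sum_{d\mid 3n+2}d$. That holomorphic weight-two form on $\Gamma_0(3)$ must equal (up to a constant) $b(q)\,c(q)/q^{1/3}$ times a constant, or equivalently is pinned down by the known parameterization; here is where I would use the stated low-weight parameterizations $E_{3,(\frac{\cdot}{3})}(q)=b^3(q)$ together with the triplication relations $a(q)=a(q^3)+2c(q^3)$, $b(q)=a(q^3)-c(q^3)$, and Borwein's identity $a^3=b^3+c^3$, to reduce everything to polynomial identities in $a(q^3)$ and $c(q^3)$. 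Matching the first few Fourier coefficients then fixes the constant, and \eqref{vb} produces the eta-quotient.

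For the two formulas in \eqref{gn2}, the cleanest route is via $E_{1,(\frac{\cdot}{3})}(q)$, whose relation to $b(q)$ is the degree-one analogue of $E_{3,(\frac{\cdot}{3})}(q)=b^3(q)$; indeed one expects $b(q)$ itself (weight one on $\Gamma_1(3)$) to be, up to normalization, $E_{1,(\frac{\cdot}{3})}(q)=1+6\sum_{n\ge 1}\bigl(\sum_{d\mid n}(\tfrac d3)\bigr)q^n$. Trisecting this $q$-expansion, the claim $\sum_{d\mid 3n+2}(\tfrac d3)=0$ is the statement that $b(q)$ has \emph{no} terms with exponent $\equiv 2\pmod 3$, which follows immediately from $b(q)=a(q^3)-c(q^3)$ since $a(q^3)$ has only exponents $\equiv 0\pmod 3$ and $c(q^3)=3q\,(q^9;q^9)_\infty^3/(q^3;q^3)_\infty$ has only exponents $\equiv 1\pmod 3$. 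The same decomposition reads off the $3n+1$ part: the exponents $\equiv 1\pmod 3$ in $b(q)$ all come from $-c(q^3)$, so $6\sum_{n\ge 0}\bigl(\sum_{d\mid 3n+1}(\tfrac d3)\bigr)q^{3n+1} = -c(q^3) = -3q(q^9;q^9)_\infty^3/(q^3;q^3)_\infty$, hence $\sum_{n\ge 0}\bigl(\sum_{d\mid 3n+1}(\tfrac d3)\bigr)q^{n} = -\tfrac12\,(q^3;q^3)_\infty^3/(q;q)_\infty$; a sign/normalization bookkeeping (the value $L(0,(\tfrac\cdot3))$ and the correct multiple of $b$) converts the $-\tfrac12$ into $+1$, giving the stated $(q^3;q^3)_\infty^3/(q;q)_\infty$.

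The main obstacle is the identification step, i.e.\ proving that the trisected divisor-sum generating function really is the eta-quotient rather than merely matching finitely many coefficients: one must either (i) verify membership in a specific finite-dimensional space of modular forms on $\Gamma_0(3)$ or $\Gamma_1(3)$ and control the behavior at both cusps, or (ii) push the triplication identities all the way to an exact polynomial identity in $a(q^3)$ and $c(q^3)$ using Borwein's relation. Route (ii) is purely formal once set up, but keeping track of the $q^{1/3}$ shifts, the normalizing constants coming from $\zeta(1-2k)$ and $L(1-k,\chi)$, and the factor of $3$ in the definition of $c(q)$ is where errors are likely to creep in; I would double-check the final constants against $\sigma_1(2)=3$ in \eqref{gn1} and against the coefficient of $q^0$ (the divisor sum over $d\mid 1$) in \eqref{gn2}.
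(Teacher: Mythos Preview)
Your overall strategy---parameterize the relevant Eisenstein series by cubic theta functions, apply the triplication relations of Theorem~\ref{brv}, then read off the residue classes---is exactly the paper's route. But there is a concrete misidentification that derails the argument for \eqref{gn2}, and a missing ingredient for \eqref{gn1}.

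For \eqref{gn2} you write that one expects $E_{1,(\frac{\cdot}{3})}(q)$ to equal $b(q)$ up to normalization. It does not: the series $1+6\sum_{n\ge 1}\bigl(\sum_{d\mid n}(\tfrac d3)\bigr)q^n$ has $q$-coefficient $6$, which matches $a(q)$, not $b(q)$ (whose $q$-coefficient is $-3$). The paper uses precisely the identity $E_{1,(\frac{\cdot}{3})}(q)=a(q)$. With the correct identification, the decomposition $a(q)=a(q^3)+2c(q^3)$ gives the $3n+1$ part as $2c(q^3)=6q(q^9;q^9)_\infty^3/(q^3;q^3)_\infty$, and the constant comes out to $+1$ on the nose---no ``sign/normalization bookkeeping'' is needed or available. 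Your $-\tfrac12$ is not a normalization artifact; it is the symptom of having used the wrong theta function. The vanishing for $3n+2$ survives because both $a(q)$ and $b(q)$ lack terms of that residue, but the quantitative statement does not.

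For \eqref{gn1} you have the right shape but never name the specific weight-two parameterization. The paper starts from $\tfrac32 E_2(q^3)-\tfrac12 E_2(q)=a^2(q)$, replaces $q\mapsto q^{1/3}$, and applies $a(q^{1/3})=a(q)+2c(q)$ to get $\tfrac32 E_2(q)-\tfrac12 E_2(q^{1/3})=(a+2c)^2=a^2+4ac+4c^2$. The residue-$2$ piece on the right is $4c^2(q)$, and on the left it is $-\tfrac12\cdot(-24)\sum_{n}\sigma_1(3n+2)q^{(3n+2)/3}$; equating and using \eqref{vb} gives \eqref{gn1} with the correct constant immediately. Your suggestion that the relevant weight-two form is (up to constant) $b(q)c(q)/q^{1/3}$ is off: that product is $3(q;q)_\infty^2(q^3;q^3)_\infty^2$, not the eta-quotient in \eqref{gn1}. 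What you need is $c^2(q)$, and the reason it appears is the square $(a+2c)^2$ coming from $a^2(q^{1/3})$.
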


\begin{proof}
To derive \eqref{gn1}, we start with the cubic parameterization for the Eisenstein series of weight two \cite[Theorem 11.10]{MR2249503}
  \begin{align} \label{gr}
    \frac{3}{2}E_{2}(q^{3}) - \frac{1}{2} E_{2}(q) = a^{2}(q).
  \end{align}
Apply Theorem \ref{brv} to \eqref{gr} to derive
\begin{align} \label{su}
  \frac{3}{2}E_{2}(q) - \frac{1}{2} E_{2}(q^{1/3}) = a^{2}(q) + 4 a(q) c(q) + 4c^{2}(q).
\end{align}
Because of the unique determination of the Maclaurin expansions of each side of \eqref{su}, we may equate terms on each side with indices congruent to $2$ modulo
$3$ to derive
\begin{align} \label{vg}
  - 12  \sum_{n=0}^{\infty}\Bigl ( \sum_{d \mid 3 n +2} d \Bigr )q^{(3n+2)/3} = 4 c^{2}(q) = 36 q^{2/3} \frac{(q^{3}; q^{3})_{\infty}^{6}}{(q;q)_{\infty}^{2}}.
\end{align}
The product representations appearing in last equality follow from \eqref{vb}. 
A proof of \eqref{gn2} is likewise obtained by applying Theorem \ref{brv} to the identity \cite[Equation (2.33)]{MR2249503}
  \begin{align} \label{sha}
    E_{1, \left ( \frac{\cdot}{3} \right )}(q) = 1 + 6\sum_{n=1}^{\infty}  \left ( \frac{n}{3} \right )\frac{q^{n}}{1 - q^{n}} &= a(q).  \qedhere
  \end{align} 
\end{proof}
Since $a(q) \equiv 1\ (mod\ 3)$, we may equate terms on both sides of
\eqref{su} of index congrruent to one modulo three to obtain the
$\ell = 1$ case of the latter congruence in \eqref{eq:la1}.  By applying the
triple product identity to the theta expansion for $a(q)$ \cite[Lemma 2.1]{MR1243610}
\begin{align}
  a(q) = \theta_{3}(q) \theta_{3}(q^{3}) + \theta_{2}(q)\theta_{2}(q^{3}), \quad \theta_{2}(q) = \sum_{n=-\infty}^{\infty} q^{(n+\frac{1}{2})^{2}}, \quad \theta_{3} = \sum_{n=-\infty}^{\infty} q^{n^{2}},
\end{align}
we obtain a companion expansion to \eqref{gn1}. Most product
formulations for dissections involving the function $a(q)$ are
similarly unwieldy and will not be further studied here. 
\begin{thm}
\begin{align*}
  \sum_{n=0}^{\infty} \Bigl ( \sum_{d \mid 3 n +1} d \Bigr )q^{n} =& \frac{(-q;q^{2})_{\infty}^{2} (-q^{3}; q^{6})_{\infty}^{2}(q^{3}; q^{3})_{\infty}^{3}(q^{2}; q^{2})_{\infty} (q^{6}; q^{6})_{\infty}}{(q;q)_{\infty}} \\ & \qquad +\frac{4 q (q^{4}; q^{4})_{\infty} (q^{12}; q^{12})_{\infty} (q^{3}; q^{3})_{\infty}^{3}}{(q^{2}; q^{4})_{\infty} (q^{6}; q^{12})_{\infty} (q;q)_{\infty} }.
\end{align*}
\end{thm}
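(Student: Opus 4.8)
The plan is to extract the arithmetic progression $3n+1$ from the identity
\begin{align*}
  \tfrac{3}{2}E_{2}(q)-\tfrac{1}{2}E_{2}(q^{1/3}) = a^{2}(q)+4a(q)c(q)+4c^{2}(q)
\end{align*}
established in the proof of Theorem~\ref{garvans} (equation \eqref{su}), and then to replace the factor $a(q)$ by its Jacobi theta decomposition. Regard every term as a power series in $q^{1/3}$. On the right, the three summands are supported, respectively, on exponents in $\mathbb{Z}$ (the term $a^{2}(q)$), in $\tfrac13+\mathbb{Z}$ (the term $4a(q)c(q)$, since $c(q)=3q^{1/3}(q^{3};q^{3})_{\infty}^{3}/(q;q)_{\infty}$ by \eqref{vb}), and in $\tfrac23+\mathbb{Z}$ (the term $4c^{2}(q)$). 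On the left, $E_{2}(q)$ contributes only integral exponents, whereas $E_{2}(q^{1/3})=1-24\sum_{m\ge1}\bigl(\sum_{d\mid m}d\bigr)q^{m/3}$ supplies the fractional ones. Equating the components with exponents in $\tfrac13+\mathbb{Z}$ therefore gives
\begin{align*}
  12\,q^{1/3}\sum_{n=0}^{\infty}\Bigl(\sum_{d\mid 3n+1}d\Bigr)q^{n}=4\,a(q)\,c(q),
\end{align*}
and dividing by $12q^{1/3}$ and invoking \eqref{vb} once more yields the compact form
\begin{align*}
  \sum_{n=0}^{\infty}\Bigl(\sum_{d\mid 3n+1}d\Bigr)q^{n}=\frac{a(q)\,(q^{3};q^{3})_{\infty}^{3}}{(q;q)_{\infty}}.
\end{align*}
This is the exact analogue of the extraction that produced \eqref{gn1}, now taken from the $3n+1$ rather than the $3n+2$ class.

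To finish, I would substitute the theta expansion $a(q)=\theta_{3}(q)\theta_{3}(q^{3})+\theta_{2}(q)\theta_{2}(q^{3})$ and evaluate each theta factor by Jacobi's triple product identity, obtaining $\theta_{3}(q)=(q^{2};q^{2})_{\infty}(-q;q^{2})_{\infty}^{2}$ and $\theta_{2}(q)=2q^{1/4}(q^{4};q^{4})_{\infty}(-q^{2};q^{2})_{\infty}$; I would then rewrite $(-q^{2};q^{2})_{\infty}=1/(q^{2};q^{4})_{\infty}$ by Euler's identity, so that
\begin{align*}
  \theta_{3}(q)\theta_{3}(q^{3})=(q^{2};q^{2})_{\infty}(q^{6};q^{6})_{\infty}(-q;q^{2})_{\infty}^{2}(-q^{3};q^{6})_{\infty}^{2},\qquad
  \theta_{2}(q)\theta_{2}(q^{3})=\frac{4q\,(q^{4};q^{4})_{\infty}(q^{12};q^{12})_{\infty}}{(q^{2};q^{4})_{\infty}(q^{6};q^{12})_{\infty}}.
\end{align*}
Multiplying through by $(q^{3};q^{3})_{\infty}^{3}/(q;q)_{\infty}$ and keeping the two summands separate reproduces exactly the two terms in the statement.

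No step is deep; the only real care is bookkeeping. The first sensitive point is sorting the summands of \eqref{su} by the residue of their exponents after clearing the cube root — this is what isolates the $3n+1$ divisor sum and fixes the constant $12$, which can independently be checked by matching the first few coefficients of the right-hand side against $\sum_{d\mid 1}d,\ \sum_{d\mid 4}d,\ \sum_{d\mid 7}d,\dots=1,7,8,\dots$. The second is juggling the several equivalent product forms of $\theta_{2}$ and $\theta_{3}$; here the identities $(q;q)_{\infty}=(q;q^{2})_{\infty}(q^{2};q^{2})_{\infty}$, $(q^{2};q^{2})_{\infty}(-q^{2};q^{2})_{\infty}=(q^{4};q^{4})_{\infty}$, and Euler's $\prod_{n\ge1}(1+q^{n})=1/(q;q^{2})_{\infty}$ do all the work. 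I expect the write-up to be short, with these routine product manipulations the most error-prone portion.
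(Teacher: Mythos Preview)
Your proposal is correct and follows essentially the same approach as the paper: the paper likewise extracts the $3n+1$ component from \eqref{su} to obtain the factor $4a(q)c(q)$, then applies the triple product identity to the theta decomposition $a(q)=\theta_{3}(q)\theta_{3}(q^{3})+\theta_{2}(q)\theta_{2}(q^{3})$ and the product form \eqref{vb} for $c(q)$. Your constant and sign bookkeeping in the extraction step is correct, and the product manipulations for $\theta_{2}$ and $\theta_{3}$ are carried out accurately.
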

In order to achieve the claimed rational expansions in $\eta(q)$ and
$\eta(q^{3})$ for dissections of higher weight Eisenstein series,
we characterize
Eisenstein series of level three whose trisections lie in the
complex span of homogeneous polynomials in $c(q)$ and $b(q)$.
\begin{lem}
  If $\overline{n}$ denotes the least positive residue class of $n$ modulo $3$, then the following series are, respectively, homogeneous polynomials in $c(q)$ and $b(q)$ over $\Bbb Q$: 
  \begin{align*}
& \sum_{k=0}^{\infty} \left ( \sum_{d \mid 3k+ \overline{2n}} d^{2n-1} \right )q^{(3k+ \overline{2n})/3},  \qquad n \not\equiv 0 \pmod{3}, \\ \sum_{k=0}^{\infty} & \left ( \sum_{d \mid 3k+ \overline{2n+1}} \left ( \frac{d}{3} \right )d^{2n} \right )q^{(3k+ \overline{2n+1})/3}, \qquad  n \not\equiv 1 \pmod{3}.
  \end{align*} 
\end{lem}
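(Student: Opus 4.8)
The plan is to realize each displayed series as the congruence-class-$\overline{2n}$ (resp. $\overline{2n+1}$) piece of a dissected Eisenstein series, and then to use the cubic parameterizations to recognize that piece as a homogeneous polynomial in $c(q)$ and $b(q)$. First I would recall that the relevant generating function, before dissection, is essentially $E_{2n,\mathbf 1}(q)$ (for the first family) or $E_{2n+1,(\frac{\cdot}{3})}(q)$ (for the second); in the notation of \eqref{eis_full}--\eqref{eisdef} these have Fourier expansions whose $q^m$ coefficient is a constant times $\sum_{d\mid m} d^{2n-1}$ (resp. $\sum_{d\mid m}\left(\frac{d}{3}\right)d^{2n}$). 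Exactly as in the derivation of \eqref{vg}, the operator that extracts the terms with index in a fixed residue class modulo $3$ and rescales $q^{m}\mapsto q^{m/3}$ produces precisely the series in the statement (up to an explicit nonzero rational constant), so it suffices to show that this extracted piece is a homogeneous polynomial in $c$ and $b$ of the appropriate weight.

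The key step is the following structural input: every Eisenstein series of level three and weight $w$ for the relevant group lies in $\mathbb Q[a(q),c(q)]$ (homogeneous of degree $w$), by the results quoted in the introduction — $a$ and $c$ generate the graded ring of modular forms on $\Gamma(3)$ (Sebbar \cite{MR1904094}, \cite{MR1893493}), and the low-weight cases $E_4 = a^4 + 8ac^3$, $E_6 = a^6 - 20a^3c^3 - 8c^6$, $E_{3,(\frac{\cdot}{3})} = b^3$ are recorded above. Writing the Eisenstein series as a $\mathbb Q$-polynomial $\sum_j \lambda_j\, a^{w-3j}(q)\, c^{3j}(q)$, I would apply the Borwein relations of Theorem \ref{brv} in the ``inverse'' direction — as in the passage from \eqref{gr} to \eqref{su} — to re-express everything after the substitution $q\mapsto q^{1/3}$: namely $a(q^{1/3}) = a(q) + 2c(q)$ and $b(q^{1/3}) = a(q) - c(q)$, hence $a(q) = \tfrac{1}{3}\big(a(q^{1/3}) + 2b(q^{1/3})\big)$ and $c(q) = \tfrac{1}{3}\big(a(q^{1/3}) - b(q^{1/3})\big)$. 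Substituting these into the polynomial expresses the (rescaled) Eisenstein series as a homogeneous polynomial of degree $w$ in $a(q^{1/3})$ and $c(q^{1/3})$ — equivalently in $a$ and $c$ — and then re-expanding $a = a(q^{3}) + 2c(q^{3})$, $b = a(q^3) - c(q^3)$ once more sorts the monomials by the residue of their $q$-exponents modulo $3$.

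The heart of the matter is the arithmetic of the exponents: $c(q)$ carries a factor $q^{1/3}$, $a(q)$ and $b(q)$ are power series in $q$, and $c(q^3)$ carries a factor $q$. So in any monomial $a^{i}(q^3)\,b^{j}(q^3)\,c^{k}(q^3)$ appearing after the double substitution, the power of $q^{1/3}$ contributed by the $c$-factor pins the residue class of the exponent modulo $1$ — and more to the point, once one tracks which monomials in $a(q^{1/3}), c(q^{1/3})$ contribute to the fixed residue class $\overline{2n}$ (resp. $\overline{2n+1}$) modulo $3$, the hypothesis $n\not\equiv 0$ (resp. $n\not\equiv 1$) modulo $3$ is exactly what forces the contributing monomials to be pure powers of $c$ and $b$ respectively, with no surviving $a$-factor. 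Concretely: a monomial $a^{w-m}c^{m}$ in $a(q^{1/3}),c(q^{1/3})$ sits in the residue class $m \pmod 3$; the dissected series in the statement collects those $m$ congruent to $\overline{2n}$ (resp.\ $\overline{2n+1}$), and the congruence hypothesis guarantees this residue is $0$, so only $m = w$ (pure $c^{w}$) survives among the homogeneous degree-$w$ monomials — while the companion piece coming through $b$ accounts for the second family. I would then invoke \eqref{vb} to rewrite $c$ and $b$ as eta quotients if an explicit product form is wanted, though the statement only asserts polynomiality.

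The main obstacle I anticipate is bookkeeping, not conceptual: one must carefully verify the weight-$2n$ Eisenstein series $E_{2n,\mathbf 1}$ and weight-$(2n+1)$ series $E_{2n+1,(\frac{\cdot}{3})}$ genuinely lie in the span of $a^{w-3j}c^{3j}$ with \emph{no} $a$-free obstruction (this is where one uses that $a\equiv 1\pmod 3$ and the structure theorems), and then track the three substitutions $q\mapsto q^{1/3}$, the re-expansion in $a,b$, and the final residue selection without sign or index errors — in particular identifying precisely which coefficient $\lambda_j$ can multiply a surviving monomial, and checking that the congruence conditions $n\not\equiv 0,1\pmod 3$ are sharp (the excluded residues are exactly those for which an $a$-factor would survive, forcing a genuine two-variable polynomial rather than a pure power). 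Once the residue arithmetic of the exponents of $c$ is laid out cleanly, the homogeneity and the identification of the generators $c$ (odd, divisor power case) and $b$ (character-twisted case) follow immediately from \eqref{vb} and the weight count.
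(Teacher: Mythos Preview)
Your proposal has a genuine gap at the central step. You correctly identify the series as dissections of $E_{2n,\mathbf 1}$ and $E_{2n+1,(\frac{\cdot}{3})}$, and you correctly observe that after writing the Eisenstein series as a homogeneous polynomial and applying Theorem~\ref{brv} at $q\mapsto q^{1/3}$, the monomial $a^{w-m}(q)c^{m}(q)$ lies in the residue class $m\bmod 3$. But from there the argument goes wrong in two ways.

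First, you misread the target: the Lemma asserts that \emph{each} of the two displayed series is a homogeneous polynomial in \emph{both} $c(q)$ and $b(q)$ --- not that one family is a pure power of $c$ and the other a pure power of $b$. The explicit examples in the paper (e.g.\ the expansion of $\sum_{d\mid 3n+2}d^{7}$) already have several terms, so a single surviving monomial cannot be the answer. Consequently your assertion that ``only $m=w$ (pure $c^{w}$) survives'' is false: the dissection in class $\overline{2n}$ retains \emph{every} monomial $a^{2n-m}c^{m}$ with $m\equiv \overline{2n}\pmod 3$, of which there are roughly $2n/3$.

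Second, and this is the missing idea, you never invoke the Borwein identity $a^{3}=b^{3}+c^{3}$ to eliminate the surviving $a$-factors. The actual mechanism is: for each surviving monomial the exponent of $a$ is $w-m\equiv w-\overline{w}\equiv 0\pmod 3$, so one may rewrite $a^{w-m}=(b^{3}+c^{3})^{(w-m)/3}$ and thereby land in $\mathbb Q[b,c]$. This is precisely the step the paper performs, and it is also where the congruence hypothesis enters in the paper's framing (ensuring $\overline{w}\in\{1,2\}$ so the dissection avoids the constant term of the Eisenstein series); your interpretation that the hypothesis ``forces the contributing monomials to be pure powers'' is not what is happening. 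Your substitution bookkeeping is also tangled (you pass back and forth between $q$, $q^{1/3}$, and $q^{3}$ with an unnecessary inversion), but that is fixable; the essential repair is to drop the ``single monomial'' claim and insert the $a^{3}=b^{3}+c^{3}$ elimination.
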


\begin{proof}
  An induction argument along with recursion formulas from Lemma \ref{cooper_rec} imply 
  \begin{align} \label{itc1}
E_{2n+1,\left ( \frac{\cdot}{3} \right )}(q) = f_{2n+1} \Bigl (a(q), b(q) \Bigr ) \qquad \hbox{and} \qquad  E_{2n,\mathbf{1}}(q) = g_{2n}\Bigl (a(q), b(q) \Bigr )    
  \end{align}
for homogeneous polynomials $f_{2n+1}$ and $g_{2n}$ in $a(q)$ and $b(q)$ of degrees at most $2n+1$ and $2n$, respectively. By replacing $q$ by $q^{1/3}$ and applying Theorem  \ref{brv}, we derive expansions for $f_{2n+1} (a(q^{1/3}), b(q^{1/3}) )$ and $ g_{2n} (a(q^{1/3}), b(q^{1/3}) )$ as homogeneous polynomials in $a(q)$ and $c(q)$ with rational coefficients.
From \eqref{borwein}, and since we have assumed $2n+1 \not \equiv 0 \pmod{3},$ the $\overline{2n+1}$-dissection of $f_{2n+1} (a(q^{1/3}), b(q^{1/3}) )$ equals, for $\lambda_{k} \in \Bbb Q$, 
\begin{align*}
 \sum_{k=0}^{\infty}  \left ( \sum_{d \mid 3k+ \overline{2n+1}} \left ( \frac{d}{3} \right )d^{2n} \right )q^{\frac{3k+\overline{2n+1}}{3}} 
%
=  \sum_{k=0}^{2n+1} \lambda_{k}\Bigl (b^{3}(q) +c^{3}(q) \Bigr )^{\frac{2n+1 - \overline{2n+1} - 3k}{3}} c^{\overline{2n+1} +3k}(q).  
\end{align*}
Since  $m-\overline{m} \equiv 0 \pmod{3}$, each exponent of $(b^{3}(q)+c^{3}(q))$ in the last summand is a natural number. Therefore, the $\overline{2n+1}$-dissection of $f_{2n+1} (a(q^{1/3}), b(q^{1/3}) )$ is a homogeneous polynomial in $b(q)$ and $c(q)$ of degree at most $2n+1$. A similar calculation shows the $\overline{2n}$-dissection of the polynomial $g_{2n}(a(q^{1/3}), b(q^{1/3}) )$ is of the required form.
\end{proof}

The Hecke Eisenstein series of appropriate weight
may be expressed as polynomials in the cubic theta
functions by employing recursion formulas for cubic Eisenstein series derived by S. Cooper and others \cite{MR2249503}. 
To avoid a conflict of notation, we adopt the conventions
\begin{align}
  G_{2k,\mathbf{1}}(q) = \frac{L(1-2k,\mathbf{1})}{2} E_{2k,\mathbf{1}}&(q),\qquad G_{2k,\left ( \frac{\cdot}{3} \right )}(q) = \frac{L \left (-2k, \left ( \frac{\cdot}{3} \right ) \right )}{2} E_{2k+1,\left ( \frac{\cdot}{3} \right )}(q), \\ 
&G_{2k}(q) = \frac{\zeta(1 - 2k)}{2}E_{2k}(q).
\end{align}
\begin{lem} \label{cooper_rec}Let $\chi$ and $\mathbf{1}$ denote, respectively, the Jacobi symbol and the principal character modulo three. Then, for each integer $n\ge 1$, 
  \begin{align*}
    G_{2n+2,\chi}(q) = -9(2n&+1)(2n+2)G_{0,\chi}^{2}(q)G_{2n,\chi}(q) \\ &- 2(2n+1)(2n+2)\sum_{j=1}^{n-1} {2n \choose 2j} G_{2j,\chi}(q)G_{2n+2-2j}(q), \\ 
    G_{2n+2,\mathbf 1}(q) = 18 G_{0, \chi}&(q) G_{2n, \chi}(q) + 6 \sum_{j=1}^{n-1} {2n \choose 2j}  G_{2j,\chi}(q)G_{2n-2j,\chi}(q).
  \end{align*}
\end{lem}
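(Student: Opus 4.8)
The plan is to treat both recursions as consequences of a single algebraic differential relation among generating functions in an auxiliary variable, in the spirit of Cooper's derivation. Since $\chi=\left(\frac{\cdot}{3}\right)$ is odd, the series $G_{2k,\chi}(q)$ occur in odd weights $2k+1$, while $G_{2k,\mathbf 1}(q)$ and the full-group series $G_{2k}(q)$ occur in even weights. Accordingly I would form the odd elliptic (Kronecker--Eisenstein) function $\mathcal A(z,\tau)$ with a simple pole of residue one at each $\chi$-weighted point of $\mathbb Z\tau+\mathbb Z$, normalized so that $\mathcal A(z,\tau)=z^{-1}+\sum_{k\ge 1}\mu_k\,G_{2k,\chi}(q)\,z^{2k-1}$, together with the even function $\mathcal C(z,\tau)=z^{-2}+\sum_{k\ge 1}\nu_k\,G_{2k}(q)\,z^{2k-2}$ for the full modular group (essentially the Weierstrass $\wp$-function) and the companion even function attached to the trivial character mod three, whose Laurent coefficients are the $G_{2k,\mathbf 1}(q)$. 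Here $\mu_k$ and $\nu_k$ are explicit rational multiples of $1/(2k-1)!$ and $1/(2k-2)!$ fixed by the $L$- and $\zeta$-value normalizations in the definitions of the $G$'s.

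The engine is Theorem \ref{brv}: the cubic identity $a^3(q)=b^3(q)+c^3(q)$, together with $a(q)=a(q^3)+2c(q^3)$ and $b(q)=a(q^3)-c(q^3)$, is precisely the degree-three algebraic relation attached to the level-three elliptic curve, and it translates into a differential relation expressing $\mathcal A'$ (or $\mathcal A\,\mathcal A'$) as a polynomial of degree at most three in $\mathcal A$ and $\mathcal C$ with coefficients built from $G_{0,\chi}$ and the $G_{2m}$; equivalently one may use the Ramanujan-type differential system satisfied by $a(q)$, $b(q)$, $c(q)$. Expanding this relation in powers of $z$ and comparing the coefficient of $z^{2n+1}$ produces a Cauchy product of the two Laurent series: only even-indexed terms survive because both functions have definite parity, so the convolution weights are exactly the binomials $\binom{2n}{2j}$, and the universal factor $(2n+1)(2n+2)$ is produced by the two orders of differentiation — the order-two pole — needed to reach that coefficient. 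Collecting terms yields the first displayed recursion, with the constants $-9$ and $-2$ emerging from the normalizing factors. The recursion for $G_{2n+2,\mathbf 1}(q)$ is obtained in the same way from the companion even generating function; alternatively one derives it from the first by applying $q\,d/dq$, which carries the odd-weight $\chi$-tower into the even-weight $\mathbf 1$-tower, the constants $18$ and $6$ appearing after simplification.

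To anchor all normalizations I would verify the case $n=1$ directly: the recursions then read $G_{4,\chi}=-108\,G_{0,\chi}^2\,G_{2,\chi}$ and $G_{4,\mathbf 1}=18\,G_{0,\chi}\,G_{2,\chi}$ (the sums being empty), which can be checked against the parameterizations already available, namely $E_{1,\left(\frac{\cdot}{3}\right)}(q)=a(q)$ from \eqref{sha}, $E_{3,\left(\frac{\cdot}{3}\right)}(q)=b^3(q)$, $E_4(q)=a^4(q)+8a(q)c^3(q)$, $E_6(q)=a^6(q)-20a^3(q)c^3(q)-8c^6(q)$, and the weight-two identity \eqref{gr}, together with the product formulas \eqref{vb} for $b(q)$ and $c(q)$; matching the first few Fourier coefficients fixes every constant.

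The main obstacle I anticipate is bookkeeping rather than conceptual: one must pin down the precise relationship between the Laurent coefficients of the Kronecker--Eisenstein functions and the normalized series $G_{2k,\chi}$, $G_{2k}$, $G_{2k,\mathbf 1}$ through the $L$- and $\zeta$-value factors, so that the degree-three relation reproduces exactly the constants $-9$, $-2$, $18$, $6$ and not merely scalar multiples of them; a secondary point requiring care is justifying the degree-three differential relation in its entirety rather than only its leading coefficients, which is exactly where Theorem \ref{brv} is used. A serviceable fallback, should the analytic setup prove cumbersome, is to note that every form occurring is a homogeneous polynomial of known degree in the weight-one generators $a(q)$ and $c(q)$ of the ring of modular forms of the principal congruence subgroup of level three, so that each side of each identity is such a polynomial; even so, the uniform statement over all $n$ ultimately rests on the recursion itself, so the differential-equation route is the natural one to carry out.
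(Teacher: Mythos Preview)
The paper does not supply its own proof of this lemma: it is stated with attribution to Cooper \cite{MR2249503} (``recursion formulas for cubic Eisenstein series derived by S.~Cooper and others''), so there is no in-paper argument to compare against. Your instinct to reconstruct the proof along Cooper's lines, via Laurent expansions of level-three elliptic functions in an auxiliary variable $z$, is the right framework and is indeed how the recursions are obtained in \cite{MR2249503}.

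That said, there is a conceptual misidentification in your sketch that would obstruct carrying it out. You assert that Theorem~\ref{brv} --- the Borwein cubic identity $a^3=b^3+c^3$ together with the triplication formulas $a(q)=a(q^3)+2c(q^3)$, $b(q)=a(q^3)-c(q^3)$ --- is ``the engine'' that ``translates into a differential relation'' for the generating functions $\mathcal A$, $\mathcal C$ in the $z$-variable. This is not correct: Theorem~\ref{brv} governs the behaviour under $q\mapsto q^{1/3}$ (equivalently $\tau\mapsto\tau/3$), which is orthogonal to differentiation in $z$. In Cooper's treatment the differential relation --- the cubic analogue of $\wp''=6\wp^2-\tfrac{g_2}{2}$ --- is obtained directly from the defining properties of the cubic elliptic functions as doubly periodic meromorphic functions with prescribed pole structure, and the recursion then drops out by equating Laurent coefficients in $z$. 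The Borwein identities play no role at that stage; in the present paper they enter only later, when one trisects the resulting polynomial expressions in $a(q)$ and $c(q)$. Your fallback remark, that each side lies in a finite-dimensional space of homogeneous polynomials in $a$ and $c$, is accurate but, as you acknowledge, does not by itself give the identity uniformly in $n$. To complete the argument you should replace the appeal to Theorem~\ref{brv} with the actual differential equation satisfied by the cubic elliptic function, as in \cite{MR2249503}.
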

By applying Lemma \ref{cooper_rec}, the cubic theta function
parameterizations for Eisenstein series of low weight
\cite{MR2249503,shen3}, as well as the familiar recursion formula
for Eisenstein series on the full modular group \cite[p. 30]{chandra}, we may generate each expansion displayed in the introduction and derive corresponding dissections for Eisenstein series of higher weight
\begin{align}
  \sum_{n=0}^{\infty}\Bigl ( &\sum_{d \mid 3 n +1} \left ( \frac{d}{3} \right ) d^{6} \Bigr )q^{n} = \frac{(q;q)_{\infty}^{17}}{(q^{3};q^{3})_{\infty}^3}+3^4 \cdot 50 q (q;q)_{\infty}^5 (q^{3};q^{3})_{\infty}^9+\frac{3^{9} \cdot 7 q^{2} (q^{3};q^{3})_{\infty}^{21}}{(q;q)_{\infty}^7}, \nonumber 
\end{align}
\begin{align}
 \sum_{n=0}^{\infty}\Bigl ( \sum_{d \mid 3 n +1} d^{9} \Bigr )q^{n} =& \frac{(q;q)_{\infty}^{26}}{(q^{3};q^{3})_{\infty}^6} + 3^{5} \cdot 23 \cdot 47 q (q;q)_{\infty}^{14} (q^{3};q^{3})_{\infty}^6 \\ &\qquad +3^{9} \cdot 2237 q^2 (q;q)_{\infty}^2 (q^{3};q^{3})_{\infty}^{18}+\frac{3^{13}\cdot 11\cdot 61 q^3 (q^{3};q^{3})_{\infty}^{30}}{(q;q)_{\infty}^{10}}, \nonumber 
\end{align}
\begin{align}
  \sum_{n=0}^{\infty}\Bigl ( \sum_{d \mid 3 n +2} \left ( \frac{d}{3} \right ) d^{10} \Bigr )q^{n} &= -\frac{3\cdot 341 (q;q)_{\infty}^{25}}{(q^{3};q^{3})_{\infty}^3}-3^{7}\cdot 4477 q (q;q)_{\infty}^{13} (q^{3};q^{3})_{\infty}^9 \\ &-3^{10} \cdot 20317 q^{2} (q;q)_{\infty} (q^{3};q^{3})_{\infty}^{21}-\frac{3^{15}\cdot 1847 q^{3} (q^{3};q^{3})_{\infty}^{33}}{(q;q)_{\infty}^{11}} \nonumber
\end{align}
\begin{align}
 \sum_{n=0}^{\infty}\Bigl (& \sum_{d \mid 3 n +1} \left ( \frac{d}{3}
 \right ) d^{12} \Bigr )q^{n} = \frac{(q;q)_{\infty}^{35}}{(q^{3};q^{3})_{\infty}^9} + 3^{4} \cdot 207076 q (q;q)_{\infty}^{23} (q^{3};q^{3})_{\infty}^3 \\ &+3^{9} \cdot 722810 q^{2} (q;q)_{\infty}^{11} (q^{3};q^{3})_{\infty}^{15}+\frac{3^{13} \cdot 722812 q^{3} (q^{3};q^{3})_{\infty}^{27}}{(q;q)_{\infty}}+\frac{3^{18} \cdot 55601 q^{4} (q^{3};q^{3})_{\infty}^{39}}{(q;q)_{\infty}^{13}}, \nonumber
\end{align}

\begin{align}
&\sum_{n=0}^{\infty}\Bigl ( \sum_{d \mid 3 n +2} d^{13} \Bigr )q^{n} = \frac{3 \cdot 2731 (q;q)_{\infty}^{34}}{(q^{3};q^{3})_{\infty}^6}+3^{6}\cdot 1674872 q (q;q)_{\infty}^{22} (q^{3};q^{3})_{\infty}^6 \\  &+3^{10} \cdot 9766130 q^{2} (q;q)_{\infty}^{10} (q^{3};q^{3})_{\infty}^{18}+\frac{3^{15} \cdot 2790064 q^{3} (q^{3};q^{3})_{\infty}^{30}}{(q;q)_{\infty}^2} +\frac{3^{19} \cdot 597871 q^{4} (q^{3};q^{3})_{\infty}^{42}}{(q;q)_{\infty}^{14}}, \nonumber 
\end{align}
\begin{align}
\sum_{n=0}^{\infty}\Bigl ( &\sum_{d \mid 3 n +1} d^{15} \Bigr )q^{n} = \frac{ (q;q)_{\infty}^{44}}{(q^{3};q^{3})_{\infty}^{12}} + 3^{4}\cdot 13\cdot 1019729 q (q;q)_{\infty}^{32} \\ &+3^{10} \cdot 80982274 q^2 (q;q)_{\infty}^{20} (q^{3};q^{3})_{\infty}^{12}+3^{14} \cdot 228972994 q^3 (q;q)_{\infty}^8 (q^{3};q^{3})_{\infty}^{24} \nonumber \\ &\qquad \qquad \qquad +\frac{3^{18} \cdot 152647045 q^4 (q^{3};q^{3})_{\infty}^{36}}{(q;q)_{\infty}^4}+\frac{3^{22}\cdot 28621321 q^5 (q^{3};q^{3})_{\infty}^{48}}{(q;q)_{\infty}^{16}}. \nonumber
\end{align}
To prove the congruences on line \eqref{eq:la}, apply Euler's theorem $a^{\varphi(n)} \equiv 1
\pmod{n}$, $\gcd (a,n)=1$, with $n = 3^{s+1}$ to reduce the divisor
sums to one of the identities of
Theorem \ref{garvans}. The congruences from \eqref{eq:la1},
follow from the fact that the sums from \eqref{eq:la1} reduce to those of
\eqref{eq:la} modulo $3^{s+1},$ since for each odd
prime $p$, \cite[Lemma 1]{MR0242764}
\begin{align}
  \label{eq:4star}
  n^{\frac{1}{2} \varphi(p^{\lambda})} =  \left ( \frac{n}{p} \right )
  \pmod{p^{\lambda}}, 
\quad p\nmid n,\quad \lambda \in \Bbb N.\end{align}

\section{Generalized cubic dissections}

We now consider cubic multisections of modular forms beyond the Eisenstein series studied in the previous section. Our purpose here is to study the dissection operators
\begin{align}
  \Omega_{3,k} \left ( \sum_{n=0}^{\infty} \nu_{n} q^{n} \right ) = \sum_{n=0}^{\infty} \nu_{3n+k} q^{n}, \qquad  \pi \left ( \sum_{n=0}^{\infty} \nu_{n} q^{n} \right ) = \sum_{n=0}^{\infty} \nu_{n} q^{n/3}
\end{align}
$k=0, 1, 2$, on subspaces of homogeneous polynomials in the cubic theta functions. We construct explicit matrix representations for these operators and discuss their spectral properties. Some of the integer eigenvalue and eigenvector pairs induce interesting congruences for the coefficients of corresponding eigenforms modulo powers of three. 
\begin{thm}\label{eq:31}
Let 
\begin{align}
  \frac{3^{5}}{504}(E_{6}(q^{3})-E_{6}(q)) = \sum_{n=0}^{\infty} u_{n} q^{n}, \qquad
27q(q^{3};q^{3})_{\infty}^{6}(q;q)_{\infty}^{6} =\sum_{n=0}^{\infty}v_{n}q^{n},
\end{align}
then 
\begin{align}
u_{3^{\ell n}} \equiv 0\ (mod\ 3^{5\ell}), \qquad v_{3^{\ell}n}\equiv0\ (mod\ 3^{2\ell}).  
\end{align}

\end{thm}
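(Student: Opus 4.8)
The plan is to recognise each of the two generating functions as an eigenvector of the dissection operator $\Omega_{3,0}\colon\sum a_nq^n\mapsto\sum a_{3n}q^n$, with eigenvalue a power of three: $3^5$ for $\sum u_nq^n$ and $3^2$ for $\sum v_nq^n$. Once this is in hand the theorem is immediate: $\Omega_{3,0}f=\lambda f$ forces $a_{3n}=\lambda a_n$, hence $a_{3^\ell n}=\lambda^\ell a_n$, and because in both cases every coefficient is an integer we obtain $a_{3^\ell n}\equiv0\pmod{\lambda^\ell}$, with $\lambda^\ell$ equal to $3^{5\ell}$, resp.\ $3^{2\ell}$.

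For $\sum u_nq^n$ I would work from the divisor-sum description. From \eqref{eis_full}, $E_6(q)=1-504\sum_{n\ge1}\sigma_5(n)q^n$ with $\sigma_5(n)=\sum_{d\mid n}d^5$, so $u_0=0$ and, for $n\ge1$, $u_n=3^5\bigl(\sigma_5(n)-\sigma_5(n/3)\bigr)$ with the convention $\sigma_5(n/3)=0$ when $3\nmid n$; in particular $u_n\in\Z$. Writing $n=3^am$ with $3\nmid m$ and using $\sigma_5(3^jm)=\dfrac{3^{5(j+1)}-1}{3^5-1}\,\sigma_5(m)$, a two-line computation gives $\sigma_5(3n)-\sigma_5(n)=3^5\bigl(\sigma_5(n)-\sigma_5(n/3)\bigr)$, hence $u_{3n}=3^5u_n$, which is the claimed eigenvalue $3^5$. (Equivalently, in the language of the section, $\Omega_{3,0}$ preserves $\langle E_6(q),E_6(q^3)\rangle$ with matrix $\left(\begin{smallmatrix}1+3^5&1\\-3^5&0\end{smallmatrix}\right)$ in that basis, eigenvalues $3^5$ and $1$, and $\tfrac{3^5}{504}\bigl(E_6(q^3)-E_6(q)\bigr)$ is the $3^5$-eigenvector.)

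For $\sum v_nq^n$ I would first use the product formulas \eqref{vb} to rewrite $27q(q^3;q^3)_\infty^6(q;q)_\infty^6=b^3(q)c^3(q)$, so that $v_n=[q^n]\,b^3(q)c^3(q)\in\Z$. The heart of the matter is the identity $\Omega_{3,0}\bigl(b^3(q)c^3(q)\bigr)=9\,b^3(q)c^3(q)$, which I would derive from Theorem \ref{brv}. Put $A=a(q^3)$ and $C=c(q^3)$; Theorem \ref{brv} gives $a(q)=A+2C$ and $b(q)=A-C$, and hence, by \eqref{borwein}, $c^3(q)=a^3(q)-b^3(q)=9C(A^2+AC+C^2)$, so $b^3(q)c^3(q)=9C(A-C)^3(A^2+AC+C^2)$. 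Expanding this as a polynomial in $A$ and $C$ and recalling that $A$ carries only exponents $\equiv0\pmod 3$ while $C$ carries only exponents $\equiv1\pmod 3$, the operator $\Omega_{3,0}$ retains precisely the monomials of $C$-degree divisible by $3$ — which turn out to be just $9A^3C^3$ and $-9C^6$ — leaving $9A^3C^3-9C^6=9C^3(A^3-C^3)=9\,c^3(q^3)b^3(q^3)=9\,V_3\bigl(b^3(q)c^3(q)\bigr)$, where $A^3-C^3=b^3(q^3)$ is once more \eqref{borwein}. Since $\Omega_{3,0}V_3=\mathrm{id}$, this gives $\Omega_{3,0}(b^3c^3)=9\,b^3c^3$, i.e.\ $v_{3n}=9v_n$. (Alternatively, $\dim S_6(\Gamma_0(3))=1$ and $S_6(\mathrm{SL}_2(\Z))=0$, so $b^3(q)c^3(q)=27\eta^6(\tau)\eta^6(3\tau)$ is, up to a scalar, the weight-$6$ newform of level $3$; then $\Omega_{3,0}$ acts on it through its Hecke eigenvalue $a_3=\pm3^{k/2-1}=\pm9$, the sign being pinned to $+$ by the expansion $q-6q^2+9q^3+\cdots$.)

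The arithmetic in both steps is routine; the real content is the recognition that these particular combinations of $E_6(q),E_6(q^3)$ and $b^3(q)c^3(q)$ are $\Omega_{3,0}$-eigenforms with power-of-three eigenvalues, which — together with integrality of the coefficients — upgrades the eigenvalue recursion to a divisibility. Within the elementary framework of the paper the fussiest step is verifying that all of the mixed $A$--$C$ cross terms in the dissection of $b^3(q)c^3(q)$ cancel down to $9\,V_3(b^3c^3)$; the Hecke-theoretic remark sidesteps that computation entirely.
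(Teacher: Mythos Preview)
Your proof is correct and follows the same strategic arc as the paper: both arguments reduce Theorem~\ref{eq:31} to the fact that the two series are $\Omega_{3,0}$-eigenforms with eigenvalues $3^5$ and $3^2$, and then read off the congruences from the recursion $a_{3^\ell n}=\lambda^\ell a_n$ together with integrality of the coefficients.

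The implementations differ, however. The paper proceeds systematically: it first sets up the general matrix representation of $\Omega_{3,0}$ on $\mathfrak{O}_{2}(a^{3},c^{3})$ (Theorem~\ref{th:gen}), diagonalises the resulting $3\times 3$ matrix, and then identifies the eigenvectors $(0,9,4)^{T}$ and $(0,1,-1)^{T}$ with $\tfrac{3^{5}}{504}(E_{6}(q^{3})-E_{6}(q))=9a^{3}c^{3}+4c^{6}$ and $b^{3}c^{3}=a^{3}c^{3}-c^{6}$ via the cubic parameterisations of \cite{MR2249503}. Your argument for $v_{n}$ is essentially the same trimidiation computation, but carried out by hand on $b^{3}c^{3}$ alone rather than via the general matrix. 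Your treatment of $u_{n}$, on the other hand, is genuinely more elementary than the paper's: you bypass the cubic-theta parameterisation of $E_{6}$ entirely and obtain $u_{3n}=3^{5}u_{n}$ directly from the multiplicativity of $\sigma_{5}$. This is cleaner for the single identity, while the paper's matrix framework buys uniformity and generalisation to higher weights (and exhibits the third eigenform $121E_{5,\chi}$ for free). Your parenthetical Hecke-theoretic remarks are also correct and give a conceptual shortcut the paper does not invoke.
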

To prove the last theorem, we first derive expansions for more general operators.
\begin{thm} \label{th:gen}  let $\mathfrak{O}_{n}(x,y)$ denote the complex span of homogeneous polynomials in $x$ and $y$ of degree $d$ and let $B_{d}$ be the matrix whose $(r,k)$th entry equals
\begin{align} \label{whose}
\sum _{j=0}^k \sum _{\ell=0}^{3 d-3 j} \binom{k}{j} \binom{3 d-3 j}{\ell} \binom{3 j}{r-\ell}2^\ell  (-1)^{3 j+\ell-r} .
 \end{align}
Suppose that $f(q) \in \mathfrak{O}_{d}(a^{3}, c^{3})$ with
  \begin{align} \label{saw}
f(q)  =\sum_{n=0}^{\infty}v_{n}q^{n} =\sum_{k=0}^{d}\alpha_{k}a^{3(d-k)}(q)c^{3k}(q),    \qquad \alpha_{n}\in\mathbb{C},
  \end{align}
\vspace{-0.1in}
  \begin{align}
B_{d}(\alpha_{0},\alpha_{1},\ldots,\alpha_{d})^{T}=(\beta_{0},\beta_{1},\ldots,\beta_{d})^{T}.
  \end{align}
If $\mathbf{1}$ denotes the principal Dirichlet character
modulo three, then, for $m=0,1,2,$
\begin{align}
\Omega_{3,m}(f) = \sum_{n=0}^{\infty}k_{3n+m}q^{n}=q^{-m/3}\sum_{k=0}^{d-\mathbf{1}(m)}\beta_{3k+m}a^{3d-3k-m}(q)c^{3k+m}(q).
\end{align}
\end{thm}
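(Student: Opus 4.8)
The plan is to realise $\Omega_{3,m}$ as a congruence-class projection performed after the substitution $q\mapsto q^{1/3}$, and then to use Theorem~\ref{brv} to re-express the outcome in terms of the monomials $a^{3d-r}(q)\,c^{r}(q)$. The starting point is the elementary observation behind every dissection in this paper: writing $f(q)=\sum_{n\ge0}v_{n}q^{n}$ we have $f(q^{1/3})=\sum_{n\ge0}v_{n}q^{n/3}$, so the part of $f(q^{1/3})$ supported on exponents lying in $\tfrac{m}{3}+\mathbb{Z}$ is exactly
\begin{align*}
\sum_{n\equiv m\,(3)}v_{n}q^{n/3}=q^{m/3}\,\Omega_{3,m}(f)(q).
\end{align*}
Thus it suffices to identify that part of $f(q^{1/3})$ and divide by $q^{m/3}$.

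Next I would pass to the cubic-theta basis. Replacing $q$ by $q^{1/3}$ in Theorem~\ref{brv} gives $a(q^{1/3})=a(q)+2c(q)$ and $b(q^{1/3})=a(q)-c(q)$, whence, by \eqref{borwein} at argument $q^{1/3}$,
\begin{align*}
c^{3}(q^{1/3})=a^{3}(q^{1/3})-b^{3}(q^{1/3})=\bigl(a(q)+2c(q)\bigr)^{3}-\bigl(a(q)-c(q)\bigr)^{3}.
\end{align*}
Substituting these into $f(q^{1/3})=\sum_{k=0}^{d}\alpha_{k}\,a^{3(d-k)}(q^{1/3})\,c^{3k}(q^{1/3})=\sum_{k=0}^{d}\alpha_{k}\,(a+2c)^{3(d-k)}\bigl[(a+2c)^{3}-(a-c)^{3}\bigr]^{k}$ and expanding by the binomial theorem---first $\bigl[(a+2c)^{3}-(a-c)^{3}\bigr]^{k}$, reindexed so that the summation variable $j$ becomes the exponent of $(a-c)^{3}$, then the cubic powers $(a+2c)^{3(d-j)}$ and $(a-c)^{3j}$---displays $f(q^{1/3})$ as a homogeneous polynomial of degree $3d$ in $a(q)$ and $c(q)$, say $\sum_{r=0}^{3d}\beta_{r}\,a^{3d-r}(q)\,c^{r}(q)$. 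Collecting the coefficient of $a^{3d-r}c^{r}$ should reproduce verbatim the double sum in \eqref{whose}, so that $(\beta_{0},\dots,\beta_{3d})^{T}=B_{d}(\alpha_{0},\dots,\alpha_{d})^{T}$ (here $B_{d}$ is the $(3d+1)\times(d+1)$ matrix with entries \eqref{whose}).

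Finally I would extract the congruence class. By \eqref{vb}, $c(q)=3q^{1/3}(q^{3};q^{3})_{\infty}^{3}/(q;q)_{\infty}\in q^{1/3}\mathbb{C}[[q]]$ while $a(q)\in\mathbb{C}[[q]]$, so every exponent occurring in $a^{3d-r}(q)\,c^{r}(q)$ is congruent to $\tfrac{r}{3}$ modulo $1$. Hence the part of $f(q^{1/3})$ with exponents in $\tfrac{m}{3}+\mathbb{Z}$ is $\sum_{r\equiv m\,(3)}\beta_{r}\,a^{3d-r}(q)\,c^{r}(q)=\sum_{k}\beta_{3k+m}\,a^{3d-3k-m}(q)\,c^{3k+m}(q)$, and dividing by $q^{m/3}$ yields the asserted formula; the constraint $0\le 3k+m\le 3d$ forces $0\le k\le d-\mathbf{1}(m)$, since $\mathbf{1}(0)=0$ and $\mathbf{1}(1)=\mathbf{1}(2)=1$. (The coefficients $\beta_{r}$ are well defined because $a(q)$ and $c(q)$ are algebraically independent, as recalled in the introduction; in fact only the resulting power-series identity is needed.)

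The only genuine difficulty is the combinatorial bookkeeping in the middle step: keeping straight which binomial index is attached to $(a+2c)^{3}$ versus $(a-c)^{3}$, and verifying that the sign $(-1)^{j}(-1)^{r-\ell}$ produced by the expansion agrees with the factor $(-1)^{3j+\ell-r}$ in \eqref{whose}---which it does, since the two exponents differ by the even integer $2(r-j-\ell)$. Everything else is a routine threefold application of the binomial theorem together with the product formulas \eqref{vb}.
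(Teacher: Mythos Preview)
Your argument is correct and essentially the same as the paper's: the only cosmetic difference is that the paper first rewrites $c^{3}=a^{3}-b^{3}$ at argument $q$ and then sends $q\mapsto q^{1/3}$, whereas you send $q\mapsto q^{1/3}$ first and then invoke $c^{3}(q^{1/3})=(a+2c)^{3}-(a-c)^{3}$; after one binomial expansion both routes produce the identical sum $\sum_{k}\alpha_{k}\sum_{j}\binom{k}{j}(-1)^{3j}(a+2c)^{3d-3j}(a-c)^{3j}$, and the remaining expansions and the residue-class extraction via $c(q)\in q^{1/3}\mathbb{C}[[q]]$ match the paper's reasoning exactly.
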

The first few matrices $B_{d}$ for $d=1,2,3$ from Theorem \ref{th:gen} are given explicitly by
\begin{align}
\left(
\begin{array}{cccc}
 1 & 6 & 12 & 8 \\
 0 & 9 & 9 & 9 \\
\end{array}
\right)^{T}, \quad 
\left(
\begin{array}{ccccccc}
 1 & 12 & 60 & 160 & 240 & 192 & 64 \\
 0 & 9 & 63 & 171 & 234 & 180 & 72 \\
 0 & 0 & 81 & 162 & 243 & 162 & 81 \\
\end{array} 
\right)^{T},
\end{align}

\begin{align}
  \left(
\begin{array}{cccccccccc}
 1 & 18 & 144 & 672 & 2016 & 4032 & 5376 & 4608 & 2304 & 512 \\
 0 & 9 & 117 & 657 & 2088 & 4140 & 5328 & 4464 & 2304 & 576 \\
 0 & 0 & 81 & 648 & 2187 & 4212 & 5265 & 4374 & 2268 & 648 \\
 0 & 0 & 0 & 729 & 2187 & 4374 & 5103 & 4374 & 2187 & 729 \\
\end{array}
\right)^{T}.
\end{align}

\begin{proof}[Proof of \ref{th:gen}]
  Suppose that $f(q) \in \mathfrak{O}_{n}(a^{3}, c^{3})$ has has the expansions given by \eqref{saw}. Then, from the Borwein's identity \eqref{borwein} and the binomial theorem we deduce
  \begin{align}
   f(q)= \sum_{k=0}^{d}\alpha_{k}a^{3(d-k)}(a^{3} - b^{3})^{k} =  \sum_{k=0}^{d} \alpha_{k} \sum_{j=0}^{k} {k \choose j} (-1)^{3j}  a^{3d - 3 j} b^{3j}
  \end{align}
Applying Theorem \eqref{brv} and the binomial theorem, we see that $\pi(f) = f(q^{1/3})$ equals 
\begin{align} \nonumber
 &\sum_{k=0}^{d} \alpha_{k} \sum_{j=0}^{k} {k \choose j} (-1)^{3j}  (a+2c)^{3d -3j} (a-c)^{3j} \\ = 
& \sum_{k=0}^{d} \sum_{j=0}^{k} \sum_{\ell = 0}^{3 d - 3 j} \sum_{i=0}^{3j} {k \choose j} {3 d - 3 j \choose \ell} {3 j \choose i} (-1)^{3j - i} 2^{\ell} a^{3 d - \ell - i} c^{\ell + i}\alpha_{k}. \label{fg}
\end{align}
Therefore, for $0 \le k \le d$ and $0 \le r \le 3d$, the coefficient of $\alpha_{k} a^{3d - r}c^{r}$ equals the expression on line \eqref{whose}. Since the contribution to the $m$-dissection $\sum_{n=0}^{\infty}v_{n}q^{(3n+m)/3}$ of $\pi(f) =f(q^{1/3})$ arises entirely from terms in \eqref{fg} with $r\equiv m\ (mod\ 3)$ we have shown that the matrices defined by \eqref{whose} generate the requisite expansions for $\Omega_{3,m}(f)$. 
\end{proof}
From Theorem \ref{th:gen}, we deduce that the operator $\Omega_{3,0}$ over $\mathfrak{O}_{d}(a^{3}, c^{3})$ corresponds to a $(d+1)\times (d+1)$ matrix whose entries come from rows $1, 4, \ldots, 3d+1$ of $B_{d}$. In particular, the respective matrix representations for $\Omega_{3,0}$ on $\mathfrak{O}_{d}(a^{3}, c^{3})$, $d = 1, 2, 3$ are
\begin{align}
  \left(
\begin{array}{cc}
 1 & 0 \\
 8 & 9 \\
\end{array}
\right), \quad 
\left(
\begin{array}{ccc}
 1 & 0 & 0 \\
 160 & 171 & 162 \\
 64 & 72 & 81 \\
\end{array}
\right), \quad 
\left(
\begin{array}{cccc}
 1 & 0 & 0 & 0 \\
 672 & 657 & 648 & 729 \\
 5376 & 5328 & 5265 & 5103 \\
 512 & 576 & 648 & 729 \\
\end{array}
\right).
\end{align}
The eigenvectors $x_{i}$ and eigenvalues $\lambda_{i}$ for the matrix corresponding to $\mathfrak{O}_{2}(a^{3}, c^{3})$  are 
\begin{align}
x_{i} = \begin{pmatrix}
      0 & 9 & 4
\end{pmatrix}^{T}, \quad 
\begin{pmatrix}
0 & 1 & -1 
\end{pmatrix}^{T}, \quad 
\begin{pmatrix}
\ 121 & - 152 & 40
\end{pmatrix}^{T}, \quad \lambda = 243, 9, 1, \quad i=1,2,3. \nonumber
\end{align}
We may apply the cubic theta function parameterizations for Eisenstein series from \cite{MR2249503} to deduce that the eigenform for $x_{1}$ equals the Fourier expansion at $\tau = 0$ for the Hecke Eisenstein series corresponding to the principal character modulo three 
\begin{align} \label{lr}
 9a^{3}(q)c^{3}(q) + 4c^{6}(q)= \sum_{n=1}^{\infty} \frac{n^{5}(q^{n}+q^{2n})}{1-q^{3n}} = \frac{3^{5}}{504}(E_{6}(q^{3})-E_{6}(q)),
\end{align}
while $x_{2}$ and $x_{3}$ correspond, respectively, to the eigenforms 
\begin{align}
  121a^{6}(q) - 152 a^{3}(q)c^{3}(q) + 40c^{6}(q) = 121 E_{5, \chi_{3,1}}(q), \qquad a^{3}(q)c^{3}(q) - c^{6}(q)= c^{3}(q) b^{3}(q). \nonumber
\end{align}
This proves Theorem \ref{eq:31}. From divisor sum expansions for
\eqref{eisdef}, it follows that the Eisenstein series for primitive
Dirichlet character $\chi$ at the cusp $\tau=\infty$,
$E_{3j,\chi}(q)$, are eigenforms for $\Omega_{3,0}$ on
$\mathfrak{O}_{3j}(a^{3}, c^{3})$ with eigenvector $1$, while the
companion eigenforms
\begin{align*}
  \sum_{n=1}^{\infty}  \frac{n^{3j-1} (\chi(n) q^{n} + \chi(2) q^{2n})}{1-q^{3n}} = \sum_{n=1}^{\infty} \Bigl ( \sum_{d \mid n} \chi(n/d) d^{3j-1} \Bigr )q^{n}, \quad j \ge 1, \quad\chi(3j)=(-1)^{j},
\end{align*}
 corresponding to the Eisenstein series at the cusp $\tau=0$, have eigenvalue $3^{3j-1}$.

The matrices corresponding to $\Omega_{3,0}$ on
$\mathfrak{O}_{3j}(a^{3}, c^{3})$ have other interesting
properties paralleling those of corresponding quintic operators from
\cite{huber_jac}. We conjecture that, up to sign, the determinants are
powers of three. A proof of the conjecture may follow from
an analysis of the spectral structure of $\Omega_{3,0}$ by way of
classical Hecke operators. 
\begin{conj}
  Let $C_{d}$ denote the matrix representation for $\Omega_{3,0}$ on the vector space  $\mathfrak{O}_{3d}(a^{3}, c^{3})$ of homogeneous polynomials of degree $3d$ in $a^{3}(q)$ and $c^{3}(q)$ over $\Bbb C$. Then $\det C_{n} = \pm 3^{w(n)}$ for some $w(n) \in \Bbb N$. For even indices, we have $w(2n) = 4n(6n-2)$.
\end{conj}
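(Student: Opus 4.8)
The plan is to recognize $\Omega_{3,0}$ as the Atkin--Lehner operator $U_3$, which on Fourier expansions sends $\sum \nu_n q^n$ to $\sum \nu_{3n} q^n$, and then to read off $\det C_d$ from the spectral theory of $U_3$ on modular forms of level three. Because $a(q)$ and $c(q)$ generate the graded ring of modular forms on $\Gamma(3)$, and because $a^{3}(q)$ and $c^{3}(q)$ are weight--three forms with integral Fourier coefficients (visible from \eqref{borwein} and the product formulas \eqref{vb}), every element of $\mathfrak{O}_{3d}(a^3,c^3)$ is a modular form of weight $9d$ on a level--three congruence group; since $\dim \mathfrak{O}_{3d}(a^3,c^3)=3d+1$ coincides with $\dim M_{9d}(\Gamma_1(3))$, the two spaces are equal. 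As $\Omega_{3,0}$ agrees with $U_3$ on $q$-expansions and $\det C_d$ is independent of the choice of basis, it suffices to compute $\det\bigl(U_3|_{M_{9d}(\Gamma_1(3))}\bigr)$.

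The second step is to decompose $M_{9d}(\Gamma_1(3))$ into $U_3$-stable summands and to evaluate the $3$-adic valuation of $\det U_3$ on each. Write
\[
 M_{9d}(\Gamma_1(3)) = \mathcal{E} \oplus S^{\mathrm{old}} \oplus S^{\mathrm{new}},
\]
where $\mathcal{E}$ is the Eisenstein subspace, $S^{\mathrm{old}}$ is the span of the pairs $\{g(\tau),g(3\tau)\}$ attached to newforms $g\in S_{9d}(SL_2(\Bbb Z))$, and $S^{\mathrm{new}}$ is the new subspace at level three. On each two--dimensional block $\langle g(\tau),g(3\tau)\rangle$ the level--one relation $T_3=U_3+3^{9d-1}V_3$, with $V_3 f(\tau)=f(3\tau)$, gives $U_3$ the characteristic polynomial $X^{2}-a_3(g)X+3^{9d-1}$, so its determinant there is $3^{9d-1}$. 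On the two--dimensional space $\mathcal{E}$ the Eisenstein series attached to the cusps $0$ and $\infty$---equivalently to the factorizations $(\mathbf{1},\chi)$ and $(\chi,\mathbf{1})$ of the nebentypus---are $U_3$-eigenforms with eigenvalues $1$ and $3^{9d-1}$, by the same divisor--sum bookkeeping used in Section~2, so $\det U_3|_{\mathcal{E}}=3^{9d-1}$. On $S^{\mathrm{new}}$ every newform is a $U_3$-eigenform, and the Atkin--Lehner--Li theory at the prime $3$, which exactly divides the level, gives $U_3^{2}=3^{9d-2}\cdot\mathrm{id}$ on $S^{\mathrm{new}}$ when the nebentypus is trivial, and $a_3(f)\overline{a_3(f)}=3^{9d-1}$ when it is the ramified character $\chi_{-3}$; since $\chi_{-3}^{\,d}$ is real, complex conjugation permutes the newforms, so in either case the determinant of $U_3$ on $S^{\mathrm{new}}$ is $\pm$ a power of $3$. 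Multiplying the three contributions yields $\det C_d=\pm 3^{w(d)}$ with $w(d)\in\Bbb N$.

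For the explicit formula at even indices, take $d=2n$, so the weight $9d=18n$ is even and the nebentypus trivial; thus there are no ramified contributions. Here $w(2n)$ is the sum of $9d-1$ (from $\mathcal{E}$), of $(9d-1)\dim S_{9d}(SL_2(\Bbb Z))$ (from $S^{\mathrm{old}}$), and of $\tfrac{1}{2}(9d-2)\bigl(\dim S_{9d}(\Gamma_0(3))-2\dim S_{9d}(SL_2(\Bbb Z))\bigr)$ (from $S^{\mathrm{new}}$). Inserting the classical dimension formulas for $S_{9d}(\Gamma_0(3))$ and $S_{9d}(SL_2(\Bbb Z))$---using that $X_0(3)$ has genus zero, two cusps, one elliptic point of order three and none of order two---and collecting terms produces a quadratic in $n$, which one then matches against $4n(6n-2)$. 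The odd indices would be handled by the parallel computation in which $S^{\mathrm{old}}=0$ and the new part carries the ramified character $\chi_{-3}$.

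The main obstacle is the $S^{\mathrm{new}}$ step: establishing that the new--space determinant is a pure power of three, and pinning down its exponent, requires the full Atkin--Lehner--Li description of $U_3$ at a prime exactly dividing the level---in particular controlling newforms with irrational Hecke eigenvalues and the ramified--nebentypus case---rather than merely the Ramanujan bound. A second, more mundane difficulty is that the dimension formulas carry low--weight exceptions, so the arithmetic that yields the precise shape of $w(2n)$ must be carried out carefully and cross--checked against the matrices $B_1,B_2,B_3$ displayed above; that reconciliation is the delicate point of the argument.
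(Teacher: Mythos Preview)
The statement you are addressing is labeled in the paper as a \emph{Conjecture}; the authors give no proof. Immediately before stating it they write that ``a proof of the conjecture may follow from an analysis of the spectral structure of $\Omega_{3,0}$ by way of classical Hecke operators,'' and that is the entirety of what the paper offers. So there is no argument in the paper to compare your proposal against.

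That said, your outline is exactly the route the authors gesture toward. Identifying $\Omega_{3,0}$ with $U_3$, matching $\mathfrak{O}_{3d}(a^3,c^3)$ with the full weight-$9d$ space for $\Gamma_1(3)$ via the Sebbar/Bannai--Koike--Munemasa--Sekiguchi structure theorem, and then reading off $\det U_3$ from the Eisenstein/old/new decomposition is the natural strategy, and your handling of the old blocks (determinant $3^{9d-1}$ per level-one newform) and of the Eisenstein pair is correct. The genuinely nontrivial ingredient is, as you flag, the Atkin--Lehner--Li description of $a_3$ on the new subspace; once one invokes $|a_3|^2=3^{k-2}$ for trivial nebentypus and $|a_3|^2=3^{k-1}$ in the ramified case, the determinant is visibly $\pm 3^{w}$ with $w\in\Bbb N$, since the characteristic polynomial of $U_3$ has integer coefficients. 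So your proposal constitutes a genuine plan of attack on the first assertion of the conjecture, going well beyond what the paper supplies.

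One caution on the explicit formula: you write that ``collecting terms produces a quadratic in $n$, which one then matches against $4n(6n-2)$,'' but you should actually carry this out rather than assert it. The exponent you obtain from the decomposition is $(9d-1)\bigl(1+\dim S_{9d}(SL_2(\Bbb Z))\bigr)+\tfrac{9d-2}{2}\dim S^{\mathrm{new}}$, and because $\dim S_{9d}(SL_2(\Bbb Z))$ is only piecewise linear in $d$ (it jumps according to $9d\bmod 12$), the resulting $w(2n)$ need not be a single quadratic polynomial in $n$. In particular, the displayed small matrices in the paper only reach $\mathfrak{O}_3(a^3,c^3)=C_1$, so they do not directly test $w(2)$; you would need to compute $C_2$ (a $7\times 7$ matrix) to check the claimed value $w(2)=16$ and to calibrate your formula. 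Until that reconciliation is done, the second clause of the conjecture should be regarded as genuinely open even granting your framework.
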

Similar formulas may be obtained for the determinants of a class of
matrix representations for $\pi(f)$ corresponding to the embedding
$\pi(\mathfrak{O}_{n}(a, b))$ in $\mathfrak{O}_{n}(a,c)$. We give a precise construction for these trimidiation arrays in Theorem \ref{th:ag}. Our proof of the theorem
 is similar to that of Theorem \ref{th:gen}, and
depends primarily on the Borwein's identity \eqref{borwein}, the transformation formulas Theorem \ref{brv}, and the binomial theorem.
\begin{thm} \label{th:ag}
  Let $f(q)=\sum_{n=0}^{\infty}\nu_{n}q^{n}$ be a homogeneous polynomial in $a(q)$ and $b(q)$, 
  \begin{align}
f(q)=\sum_{n=0}^{d}\alpha_{n}a^{n}(q)b^{(d-n)}(q),    
  \end{align}
where $\alpha_{n}\in\mathbb{C}$. Then there exists a $(d+1)\times(d+1)$
  matrix $\mathcal{B}_{d}$ over $\mathbb{Z}$ such that if
  \begin{align}
    \mathcal{B}_{d}(\alpha_{0},\alpha_{1},\ldots,\alpha_{d})^{T}=(\beta_{0},\beta_{1},\ldots,\beta_{d})^{T}
  \end{align}
then
\begin{align}
\pi (f) = f(q^{1/3})=\sum_{n=0}^{d}\beta_{n}a^{n}(q)c^{(d-n)}(q),  
\end{align}
Moreover, the $(r,n)$th entry of $\mathcal{B}_{d}$ equals
\begin{align}
2^{d-n} (-1)^{n-r} \binom{n}{r} \, _2F_1\left(n-d,-r;n-r+1;-\frac{1}{2}\right).
\end{align}
\end{thm}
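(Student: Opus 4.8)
The plan is to mimic the proof of Theorem \ref{th:gen} with $b(q)$ playing the role formerly played by $c(q)$. First I would expand $f(q^{1/3})$ using the transformation formulas of Theorem \ref{brv}. Writing $f(q) = \sum_{n=0}^{d}\alpha_n a^n(q) b^{d-n}(q)$, replacing $q$ by $q^{1/3}$ and substituting $a(q^{1/3}) = a(q) + 2c(q)$ and $b(q^{1/3}) = a(q) - c(q)$ gives
\begin{align*}
\pi(f) = \sum_{n=0}^{d}\alpha_n (a+2c)^n (a-c)^{d-n}.
\end{align*}
Expanding each factor by the binomial theorem,
\begin{align*}
(a+2c)^n (a-c)^{d-n} = \sum_{s=0}^{n}\sum_{t=0}^{d-n}\binom{n}{s}\binom{d-n}{t}2^{s}(-1)^{t} a^{d-s-t} c^{s+t},
\end{align*}
so the coefficient of $\alpha_n a^{r} c^{d-r}$ (i.e. setting $s+t = d-r$) is $\sum_{t} \binom{n}{d-r-t}\binom{d-n}{t}2^{d-r-t}(-1)^{t}$, which, after reindexing, should be organized as the $(r,n)$-entry of $\mathcal{B}_d$. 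Since no dissection by congruence class is being performed here — $\pi(f) = f(q^{1/3})$ as a whole lies in $\mathfrak{O}_d(a,c)$ because $d - s - t$ and $s+t$ always sum to $d$ — there is no need for Borwein's identity \eqref{borwein}; every monomial automatically has total degree $d$. That gives the existence of $\mathcal{B}_d$ over $\Bbb Z$ and the claimed action on the coefficient vectors immediately.

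The remaining work is the closed-form evaluation of the entry as a Gauss hypergeometric value. I would take the coefficient of $a^r c^{d-r}$ obtained above, pull out the term with the largest power of $2$ (or whichever normalization makes the series start at $1$), and recognize the remaining finite sum as a terminating $_2F_1$. Concretely, collecting the coefficient of $a^r$ from $(a+2c)^n$ times the coefficient of $a^{r - (n - s)}$... more cleanly: write $(a+2c)^n(a-c)^{d-n}$ and extract $[a^r]$. The coefficient is $\sum_{j}\binom{n}{j}2^{n-j}\binom{d-n}{r-j}(-1)^{d-n-(r-j)}$, where $j$ indexes the power of $a$ taken from the first factor. Factoring out $2^{d-n}(-1)^{n-r}\binom{n}{r}$ (the $j=r$ boundary-type term, valid when $r \le n$) and shifting the summation index converts the sum into $\binom{n}{r}\,_2F_1(n-d, -r; n-r+1; -\tfrac12)$ by the standard identity $\sum_k \binom{n}{r-k}\binom{d-n}{k}(\cdots)$; matching the Pochhammer ratios $(n-d)_k(-r)_k/((n-r+1)_k k!)$ against the binomial ratios is a routine but slightly fiddly index-chase.

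The main obstacle is precisely that index-chase: getting the hypergeometric parameters exactly right, including the sign of the argument $-\tfrac12$ and the shift $n-r+1$ in the denominator parameter, and confirming that the prefactor $2^{d-n}(-1)^{n-r}\binom{n}{r}$ is correct (in particular that the series is the one that terminates because $n - d \le 0$ and $-r \le 0$ are nonpositive integers, so the $_2F_1$ is a polynomial and the formula makes sense for all $0 \le r, n \le d$, with the understanding that $\binom{n}{r} = 0$ forces the entry to vanish when $r > n$, consistent with $\mathcal{B}_d$ being lower-triangular). Once the evaluation is pinned down for one convenient ordering of the indices, everything else — integrality, the dimensions, and the stated action — follows from the expansion already carried out. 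I would also remark, paralleling the discussion after Theorem \ref{th:gen} and the conjecture on $\det C_n$, that $\mathcal{B}_d$ is triangular with diagonal entries $\binom{n}{n}2^{d-n}\,_2F_1(n-d,-n;1;-\tfrac12)$, so its determinant is manifestly a signed product that one expects to be a power of two times small factors; but this observation is not needed for the statement as given.
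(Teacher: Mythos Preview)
Your approach is exactly the paper's: substitute the transformation formulas of Theorem \ref{brv}, expand by the binomial theorem, collect the coefficient of $a^{r}c^{d-r}$, and identify the resulting finite sum with a terminating ${}_2F_1$; the paper records precisely the binomial form you describe as equation \eqref{eq:m}. Your observation that Borwein's cubic identity \eqref{borwein} is not actually needed here (unlike in Theorem \ref{th:gen}) is correct.

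One caution about your final paragraph: the matrices $\mathcal{B}_d$ are \emph{not} lower-triangular. Already $\mathcal{B}_1=\left(\begin{smallmatrix}2&-1\\1&1\end{smallmatrix}\right)$ has a nonzero $(1,0)$ entry. When $r>n$, the factor $\binom{n}{r}$ vanishes but the denominator parameter $n-r+1$ of the ${}_2F_1$ is a nonpositive integer, so the hypergeometric factor is singular; the product must be interpreted as a limit (equivalently, one simply reads off the entry from the binomial sum \eqref{eq:m}, which is perfectly finite). Consequently your closing remark about the determinant being ``a power of two times small factors'' is off: the diagonal entries you wrote down are not the diagonal of a triangular matrix, and in fact the paper conjectures $\det\mathcal{B}_d=3^{d(d+1)/2}$, which the explicit examples $\det\mathcal{B}_1=3$, $\det\mathcal{B}_2=27$, $\det\mathcal{B}_3=729$ bear out.
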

The hypergeometric function ${_{2}}F_{1}$, defined in \cite[p. 61]{aaa1}, arises from the the expansion 
\begin{align}
  \label{eq:m}
  \sum _{k=0}^{d-n} 2^{(d-n)-k} (-1)^{n-(r-k)} \binom{d-n}{k} \binom{n}{r-k}
\end{align}
for the entries of $\mathcal{B}_{d}$ as sums of binomial
coefficients. The first few matrices $\mathcal{B}_{d}$ defined in Theorem
\ref{th:ag} are
\begin{align*}
\mathcal{B}_{1}  =   \left(
\begin{array}{cc}
 2 & -1 \\
 1 & 1 \\
\end{array}
\right), \quad 
\mathcal{B}_{2}  =  \left(
\begin{array}{ccc}
 4 & -2 & 1 \\
 4 & 1 & -2 \\
 1 & 1 & 1 \\
\end{array}
\right), 
\quad
\mathcal{B}_{3}  =  \left(
\begin{array}{cccc}
 8 & -4 & 2 & -1 \\
 12 & 0 & -3 & 3 \\
 6 & 3 & 0 & -3 \\
 1 & 1 & 1 & 1 \\
\end{array}
\right). 
\end{align*}
As with the matrices in Theorem \ref{th:gen}, we observe that the determinants of the matrices from
Theorem \ref{th:ag} are certain powers of three. This leads to a more general conjecture. 
\begin{conj}
  Let $\mathcal{B}_{d}$ be the trimidiation matrix defined in Theorem \ref{th:ag}. Then $$\det \mathcal{B}_{n} = 3^{n(n+1)/2}.$$
\end{conj}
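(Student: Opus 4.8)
The plan is to avoid any direct attack on the explicit hypergeometric entries and instead exploit the compositional/multiplicative structure of the trimidiation map $\pi$. The key observation is that $\pi$ acts on the \emph{graded} ring: if $f \in \mathfrak{O}_d(a,b)$ and $g \in \mathfrak{O}_e(a,b)$, then $\pi(fg) = \pi(f)\pi(g)$ as elements of $\mathfrak{O}_{d+e}(a,c)$, simply because $\pi$ is the substitution $q \mapsto q^{1/3}$ followed by Theorem~\ref{brv}. Consequently the single ring homomorphism $\Phi$ on $\Bbb C[a,b]$ sending $a \mapsto a+2c$, $b \mapsto a-c$ (this is exactly $\pi$ read off Theorem~\ref{brv}, since $a(q) = a(q^3)+2c(q^3)$ and $b(q) = a(q^3)-c(q^3)$) restricts in each degree $d$ to the linear map with matrix $\mathcal{B}_d$ relative to the ordered bases $\{a^d, a^{d-1}b,\ldots,b^d\}$ and $\{a^d, a^{d-1}c,\ldots,c^d\}$. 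So I would first record this clean algebraic reinterpretation of $\mathcal{B}_d$, which replaces \eqref{eq:m} entirely and makes the determinant a question about a degree-$d$ symmetric-power of a $2\times 2$ linear substitution.

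The next step is to compute the determinant of that symmetric power. The linear substitution on the span of $\{a,c\}$ carries $(a,b)\mapsto (a+2c, a-c)$; but to be careful one must track that the \emph{source} basis monomials are built from $b$ while the \emph{target} monomials are built from $c$. Factor $\Phi$ as the composition of (i) the substitution $S: b \mapsto a - c$, i.e. the degree-$d$ part of the map fixing $a$ and sending $b\mapsto a-c$, whose matrix relative to $\{a^{d-j}b^j\}$ and $\{a^{d-j}c^j\}$ is lower-triangular with $1$'s on the diagonal coming from the leading term $(-c)^j$... actually the diagonal entries are $(-1)^j$, so $\det S = (-1)^{d(d+1)/2}$; and (ii) the substitution $T: a \mapsto a+2c$ on $\mathfrak{O}_d(a,c)$, which is unipotent lower-triangular (the top coefficient of $(a+2c)^{d-j}c^j$ in $a^{d-j}c^j$ is $1$), hence $\det T = 1$. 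That would give $\det\mathcal{B}_d = (-1)^{d(d+1)/2}$, which is \emph{wrong} — so this factorization is too naive, and the real content is that the two substitutions do not commute and the change-of-basis between "powers of $b$" and "powers of $c$" interacts nontrivially. The correct route is to use the classical formula $\det(\operatorname{Sym}^d M) = (\det M)^{?}$: for a $2\times 2$ matrix $M$, $\det \operatorname{Sym}^d M = (\det M)^{d(d+1)/2}$. Here $M$ is the $2\times 2$ matrix of the substitution $(a,b)\mapsto(a+2c,a-c)$ written in the \emph{common} basis — i.e. expressing both $a+2c$ and $a-c$ in terms of $\{a,c\}$ gives $M = \begin{pmatrix} 1 & 1\\ 2 & -1\end{pmatrix}$, so $\det M = -3$, whence $\det\mathcal{B}_d = (-3)^{d(d+1)/2}$. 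The sign discrepancy with the stated $3^{n(n+1)/2}$ is then resolved by noting that the ordering conventions for the monomial bases used to \emph{define} $\mathcal{B}_d$ in Theorem~\ref{th:ag} (which pairs $a^n b^{d-n}$ with $a^n c^{d-n}$, indexed by $n$ from $0$ to $d$) differ from the "descending in the second variable" convention that makes $M$ act naturally, and reversing one basis contributes a sign $(-1)^{\lfloor (d+1)/2\rfloor}$ that exactly cancels the $(-1)^{d(d+1)/2}$.

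So the key steps, in order, are: (1) identify $\mathcal{B}_d$ as the matrix of the $d$-th symmetric power of the fixed $2\times 2$ substitution matrix $M=\begin{pmatrix}1&1\\2&-1\end{pmatrix}$ coming from Theorem~\ref{brv}, verified by checking the homomorphism property of $\pi$ and matching against the displayed small cases $\mathcal{B}_1,\mathcal{B}_2,\mathcal{B}_3$; (2) invoke (or prove by a short induction, using that $\operatorname{Sym}^{d}$ of $M$ has eigenvalues the degree-$d$ monomials in the eigenvalues $\mu_1,\mu_2$ of $M$) the identity $\det\operatorname{Sym}^d M = (\mu_1\mu_2)^{d(d+1)/2} = (\det M)^{d(d+1)/2} = (-3)^{d(d+1)/2}$; (3) reconcile signs by carefully accounting for the basis-ordering convention in Theorem~\ref{th:ag}, yielding $\det\mathcal{B}_n = 3^{n(n+1)/2}$. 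The main obstacle is step (3): the bookkeeping of which monomial ordering is in force and how the permutation reversing a basis affects the sign is exactly the kind of off-by-a-sign trap that sank the naive factorization above; getting it right requires pinning down the indexing in \eqref{eq:m} precisely and computing the sign of the order-reversing permutation on $d+1$ letters, which is $(-1)^{\lfloor (d+1)/2\rfloor}$, and checking this matches $(-1)^{d(d+1)/2}$ for all $d$ (it does, since both equal $+1$ iff $d\equiv 0,3\pmod 4$). Once the sign is nailed down the rest is the standard symmetric-power determinant computation, and the analogous argument would also settle the companion claim $w(2n)=4n(6n-2)$ in the earlier conjecture by the same symmetric-power reasoning applied to the $3\times 3$ (rather than $2\times 2$) substitution underlying $\Omega_{3,0}$.
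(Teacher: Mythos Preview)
The statement you address is presented in the paper as a \emph{conjecture}; the authors give no proof, only the explicit matrices $\mathcal{B}_1,\mathcal{B}_2,\mathcal{B}_3$ as evidence. So there is no argument in the paper to compare against, and your proposal, if it goes through, actually resolves the conjecture rather than merely reproducing a known proof.

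Your approach is correct. The identification of $\mathcal{B}_d$ with (a reindexed version of) $\operatorname{Sym}^d M$ for $M=\bigl(\begin{smallmatrix}1&1\\2&-1\end{smallmatrix}\bigr)$, together with the classical identity $\det\operatorname{Sym}^d M=(\det M)^{d(d+1)/2}=(-3)^{d(d+1)/2}$, is exactly the right mechanism. Your instinct that step~(3) is the only delicate point is also right, and in fact the paper itself contains an indexing slip that makes it sharper than you anticipated: although Theorem~\ref{th:ag} writes $f=\sum_n\alpha_n a^n b^{d-n}$, the entry formula \eqref{eq:m} and the displayed matrices are actually consistent with the source basis $\{a^{d-n}b^n\}_{n=0}^d$ and the target basis $\{a^{r}c^{d-r}\}_{r=0}^d$. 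Concretely, $\mathcal{B}_d$ is the standard matrix of $\operatorname{Sym}^d M$ with its \emph{rows} reversed (check this against $\mathcal{B}_1$ and $\mathcal{B}_2$). The reversal contributes the sign $(-1)^{\lfloor(d+1)/2\rfloor}$, and your parity check that $(-1)^{\lfloor(d+1)/2\rfloor}=(-1)^{d(d+1)/2}$ for every $d$ then gives $\det\mathcal{B}_d=3^{d(d+1)/2}$ on the nose.

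One caveat on your closing remark: the companion conjecture about $\det C_d$ does \emph{not} fall to the same argument. The operator $\Omega_{3,0}$ is not a ring homomorphism (it is a dissection, hence a projection composed with a rescaling of the exponent), so it is not the symmetric power of any fixed linear map on a two-dimensional space; the paper's own hint toward Hecke operators is pointing at genuinely different machinery.
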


\end{document}